

\documentclass[12pt,reqno]{amsart}
\usepackage{amscd,amssymb}
\usepackage{graphicx,enumerate}

\usepackage{mathptmx}

\usepackage{color}
\usepackage{pgf, tikz}
\usepackage{url}
\usepackage{enumerate}





\def\MEDSCIP{\relax} 

\def\NOINDENT{\relax} 

\def\height{\operatorname{ht}}

\def\width{\operatorname{width}}

\def\conv{\operatorname{conv}}

\def\vertex{\operatorname{vert}}
\def\inte{\operatorname{int}}
\def\Hilb{\operatorname{Hilb}}
\def\vol{\operatorname{vol}}
\def\Aff{\operatorname{Aff}}
\def\join{\operatorname{join}}

\def\B{\operatorname{B}}
\def\para{\mathsf{par}}
\def\Lpara{\mathsf{Lpar}}
\def\LLL{\mathcal{L}}
\def\Lat{\mathsf{L}}

\def\b{\mathbf b}
\def\v{\mathbf{v}}

\def\e{\mathbf{e}}

\def\np{{\operatorname{\mathsf{NPol}}}}

\def\RR{\mathbb R}
\def\QQ{\mathbb Q}

\def\ZZ{\mathbb Z}
\def\NN{\mathbb N}
\def\FF{\mathbb F}

\let\phi=\varphi
\let\epsilon=\varepsilon

\textwidth=15cm \textheight=22cm \topmargin=0.0cm \oddsidemargin=0.5cm \evensidemargin=0.5cm \advance\headheight1.15pt

\newtheorem{lemma}{Lemma}[section]  \newtheorem{theorem}[lemma]{Theorem}
\newtheorem{proposition}[lemma]{Proposition} 

\theoremstyle{definition} \newtheorem{definition}[lemma]{Definition} \newtheorem{remark}[lemma]{Remark}
\newtheorem{example}[lemma]{Example} \newtheorem{question}[lemma]{Question}

\begin{document}

\title[Quantum jumps of normal polytopes]{Quantum jumps of normal polytopes}

\author{Winfried Bruns}
\author{Joseph Gubeladze}
\author{Mateusz Micha$\l$ek}

\address{Universit\"at Osnabr\"uck, Institut F\"ur Mathematics, 49069 Osnabr\"uck, Germany}

\email{wbruns@uos.de}

\address{Department of Mathematics\\
         San Francisco State University\\
         1600 Holloway Ave.\\
         San Francisco, CA 94132, USA}
\email{soso@sfsu.edu}

\address{
Freie Universit\"at\\
 Arnimallee 3\\
 14195 Berlin, Germany\newline
Polish Academy of Sciences\\
         ul. \'Sniadeckich 8\\
         00-956 Warsaw\\
         Poland}
\email{wajcha2@poczta.onet.pl}

\thanks{Supported by grants DFG BR 688/22-1 (Bruns), NSF DMS-1301487 and GNSF DI/16/5-103/12 (Gubeladze), Polish National Science Center grant no. 2012/05/D/ST1/01063 (Micha{\l}ek)}

\subjclass[2010]{Primary 52B20; Secondary 11H06, 52C07}

\keywords{Lattice polytope, normal polytope, maximal polytope, quantum jump}

\begin{abstract}
We introduce a partial order on the set of all normal polytopes in $\RR^d$. This poset $\np(d)$ is a natural discrete counterpart of the continuum of convex compact sets in $\RR^d$, ordered by inclusion, and exhibits a remarkably rich combinatorial structure. We derive various arithmetic bounds on elementary relations in $\np(d)$, called \emph{quantum jumps}. The existence of extremal objects in $\np(d)$ is a challenge of number theoretical flavor, leading to interesting classes of normal polytopes: minimal, maximal, spherical. Minimal elements in $\np(5)$ have played a critical role in disproving various covering conjectures for normal polytopes in the 1990s. Here we report on the first examples of maximal elements in $\np(4)$ and $\np(5)$, found by a combination of the developed theory, random generation, and extensive computer search.
\end{abstract}

\maketitle

\section{Introduction}\label{INTRO}

Normal polytopes are popular objects in combinatorial commutative algebra and toric algebraic geometry: they define the normal homogeneous monoid algebras  \cite[Ch.~2]{Kripo}, \cite[Ch.~7]{MS} and the projectively normal toric embeddings \cite[Ch.~10]{Kripo}, \cite[Ch.~2.4]{CLS}. The motivation for normal polytopes in this work is the following more basic observation: these lattice polytopes are natural discrete analogues of (continuous) convex polytopes and, more generally, convex compact sets in $\RR^d$.

Attempts to understand the normality property of lattice polytopes in more intuitive geometric or integer programming terms date back from the late 1980s and 1990s; see Section \ref{The poset}. The counterexamples in \cite{BrICP,BrGuNCov,BGHMW} to several conjectures in that direction implicitly used a certain poset $\np(d)$ of the normal polytopes in $\RR^d$.  We explicitly introduce this poset in Section \ref{The poset}. If normal polytopes or, rather, the sets of their lattice points are the discrete counterparts of convex compact sets in $\RR^d$, then the poset $\np(d)$ is the corresponding discrete analogue of the continuum of all such convex compact sets, ordered by inclusion. Put another way,  $\np(d)$ provides a formalism for the `discrete vs. continuous' dichotomy in the context of convex geometry.

In this article we focus on the discrete structure of the poset $\np(d)$, in particular the existence of maximal elements. In future studies we plan to examine the topological and finer geometric properties of the underlying order complex. One of our motivations is to study global properties of the family of normal polytopes, in analogy to moduli spaces -- not only properties of particular polytopes. The present article follows a program that was sketched in \cite{FPSAC}.

Another aim is to set up a formalism for the search of special normal polytopes (or, equivalently, projective toric varieties) by a random walk on $\np(d)$. Motivated from physics, one can consider various measures on the smallest possible changes of the polytope as analogs of potential of the jumps. Such directed search proved useful in our search for maximal polytopes in $\np(4)$ and $\np(5)$. As it turns out, random search can also hit maximal polytopes, notably in  $\np(4)$.

A pair $(P,Q)$ of normal polytopes of equal dimension is called 
 a \emph{quantum jump} if $P\subset Q$ and $Q$ has exactly one more lattice point than $P$. Here the word \emph{quantum} refers to the smallest possible discrete change of a normal polytope and also points to random walks on $\np(d)$: among all possible quantum jumps one chooses the ones according to an adopted strategy.
 
 Quantum jumps define a partial order on the set of normal polytopes in which $P<Q$ if and only if there exists an ascending chain of quantum jumps that leads from $P$ to $Q$. We consider the relation $<$ as the discrete analogue of the set theoretic inclusion between convex compact subsets of $\RR^d$.

The extent of distortion of the continuum in the suggested discretization process is encoded in extremal elements of $\np(d)$ and the topological complexity of the geometric realization of $\np(d)$: local and global properties of $\np(d)$, respectively. It is not a priori clear that $\np(d)$ exhibits any of these irregularities at all. 


Explicit nontrivial (i.e., different from unimodular simplices) minimal elements,  which were called \emph{tight} polytopes in \cite{BrGuNCov}, have been known for quite a while.  They  exist in all dimensions $\ge 4$ and special instances were crucial in disproving the unimodular covering of normal polytopes or the integral Carath\'eodory property. The existence of nontrivial minimal elements in $\np(3)$ is open.

Finding maximal elements is much more difficult but, as mentioned above, we have been successful in dimensions $4$ and $5$. There seems to be no way to construct higher dimensional maximal polytopes from a given one, but there is little doubt that maximal polytopes exist in all dimensions $\ge 4$. However, the existence in dimension $3$ remains open.

Sections \ref{Basics} and \ref{Normal polytopes} recall basic notions and results for the study of normal polytopes. 

In Section \ref{Lattice-Stratification} we study the height of a lattice point $z$ over a polytope $P$ that, roughly speaking, counts the number of lattice layers between $z$ and the facets of $P$ that are visible from $z$.
It is the natural measure for distance based on the lattice structure. We show that there is no bound on the height of quantum jumps that depends only on dimension. A very precise characterization of quantum jumps in dimension $3$ is obtained at the end of Section \ref{Lattice-Stratification}.

Section \ref{Bounding-jumps} contains a sharp bound for quantum jumps in all dimensions. It is roughly proportional to the product of dimension and the lattice diameter of $P$ that we call \emph{width}. It shows that there are only finitely many jumps $(P,Q)$ for fixed $P$ and allows us to find them efficiently.

Section \ref{Spherical-states} is devoted to special normal polytopes defined by spheres and, more generally, ellipsoids. In particular we prove that all quantum jumps are infinitesimally close to the initial polytope relative to the size of the latter when the shape approximates a sphere with sufficient precision. The question on normality of the convex hulls of all lattice points in ellipsoids naturally arises. In dimension $3$ we always have the normality, and our experiments did not lead to counterexamples in dimensions 4 and 5.

The final Section \ref{Maximal states} describes our experimental approach to the existence of maximal polytopes. The main difficulty was to find a criterion that lets us choose terminating ascending chains with some positive probability. The maximal polytopes were eventually found by a combination of random generation and directed search. The computational power of Normaliz \cite{Nmz} has been proved invaluable for these experiments.

\bigskip\noindent\emph{Acknowledgment.} We thank B. van Fraassen for his comments in the early stages of this project. We are grateful to anonymous reviewers for their helpful comments and spotting several inaccuracies.

\section{Basic notions}\label{Basics}

The sets of nonnegative integer and real numbers are denoted, respectively, by $\ZZ_+$ and $\RR_+$. The Euclidean norm of a vector $v\in\RR^d$ is $\|v\|$. We write $\e_1,\ldots,\e_d$ for the standard basis vectors of $\RR^d$. A \emph{point configuration} is a finite subset $X\subset\ZZ^d$.
For a subset $X\subset\RR^d$ we set $\Lat(X)=X\cap\ZZ^d$.

For a more detailed account and the proofs for the statements in this section we refer the reader to \cite[Ch. 1 and 2]{Kripo}.

\subsection{Polytopes}\label{Polytopes}
An \emph{affine subspace} of a Euclidean space is a shifted linear subspace. An affine map between two affine spaces is a map that respects \emph{barycentric coordinates}. Equivalently, an affine map is the restriction of a linear map between the ambient vector spaces followed by a parallel translation.

Two subsets $X,Y\subset\RR^d$ are called \emph{unimodularly equivalent} if there is an \emph{integral-affine isomorphism} $X\to Y$, i.e., there is an affine isomorphism $f:\RR^d\to\RR^d$, mapping $\ZZ^d$ bijectively to itself, such that $f(X)=Y$.

A \emph{closed affine half-space} $H^+\subset\RR^d$ is a subset of the form
$$
\{x\in\RR^d\ :\ h(x)\ge0\}\subset\RR^d,
$$
where $h:\RR^d\to\RR$ is a non-zero affine map and $H=h^{-1}(0)$ is the bounding hyperplane. When $h$ is a linear map, the half-space $H^+$ and the hyperplane $H$ will be called \emph{linear} or \emph{homogeneous}.

An affine subspace $A$ is \emph{rational} if it is spanned by points in $\QQ^d$. The bounding hyperplane $H$ of a \emph{rational} half-space $H^+$ is given in the form
\begin{equation}\label{half-form}
H=\{x\in\RR^d: \alpha_1x_1+\dots+\alpha_dx_d+\beta\ge 0\}
\end{equation}
with an integer $\beta$ and coprime integers $\alpha_1,\dots,\alpha_d\in\ZZ^d$.

The \emph{affine hull} $\Aff(X)$ of a subset $X\subset\RR^d$ is the smallest affine subspace of $\RR^d$ containing $X$. The \emph{convex hull} of $X$ will be denoted by $\conv(X)$.

All considered polytopes are assumed to be \emph{convex}, i.e., a polytope is the convex hull of a finite subset $X\subset \RR^d$. Equivalently, a polytope $P$ is a bounded intersection of finitely many closed affine half-spaces: $P=\bigcap_{i=1}^n H_i^+$. The \emph{faces} of $P$ are the intersections of the form $P\cap H$ where $H^+\subset\RR^d$ is an affine closed affine half-space containing $P$. Also $P$ is a face of itself. The \emph{vertices} of $P$ are the $0$-dimensional faces of $P$, and the $(d-1)$-dimensional faces of $P$ are called the \emph{facets}. The vertex set of $P$ will be denoted by $\vertex(P)$. A \emph{simplex} is a polytope whose number of vertices exceeds the dimension of the polytope by one.

A full-dimensional polytope $P\subset\RR^d$ admits a unique representation $P=\bigcap_{i=1}^n H_i^+$, where the $H_i^+\subset\RR^d$ are closed affine half-spaces and $\dim(P\cap H_i)=d-1$, $i=1,\ldots,n$. We call this representation the \emph{irreducible representation} of $P$.

For every polytope $P\subset\RR^d$, its interior and the boundary with respect to $\Aff(P)$ will be denoted, respectively, by $\inte(P)$ and $\partial P=P\setminus\inte(P)$.

A \emph{lattice polytope} $P\subset\RR^d$ is a polytope whose vertices are \emph{lattice points}, i.e., elements of $\ZZ^d$. A \emph{rational polytope} has its vertices in $\QQ^d$.

A lattice simplex is called \emph{unimodular} if the edge vectors at some (equivalently, any) vertex define a part of a basis of $\ZZ^d$.

Let $X$ be a subset of $\RR^d$ such that $\Aff(X)=\Aff(\Lat(\Aff(X)))$, for example a lattice polytope. Then we can assign to $X$ a \emph{normalized volume}: it is the measure on $\Aff(X)$ in which a unimodular simplex in $\Aff(X)$ has volume $1$. If $\Aff(X)=\RR^d$, the normalized volume equals $d!$ times the Euclidean $d$-volume and will be denoted by $\vol_d$. Note that the normalized volume is invariant under integral-affine transformations, but not under Euclidean isometries of $\RR^d$ if $\dim\Aff(X)<d$.

For a full-dimensional lattice polytope $P\subset\RR^d$, the normalized volume $\vol_d(P)$ is a natural number. Moreover, $P$ is a unimodular simplex if and only if $\vol_d(P)=1$.

Let $P\subset\RR^d$ and $Q\subset\RR^e$ be two lattice polytopes. A $(d+e+1)$-dimensional lattice polytope $R$ is a \emph{join} of $P$ and $Q$ if it is unimodularly equivalent to the \emph{standard join}, defined by
\begin{align*}
\join(P,Q)=\conv\biggl((P,\underbrace{0,\ldots,0}_{e+1}),(\underbrace{0,\ldots,0}_d,1,Q)\biggr)\subset\RR^{d+e+1}.
\end{align*}
Thus unimodular simplices are joins of lattice points.

A $d$-dimensional polytope will be called a $d$-polytope. 
For a lattice $d$-polytope $P\subset\RR^d$ and a facet $F\subset P$, there exists a unique affine map, the \emph{facet-height function,} $\height_F:\RR^d\to\RR$ such that $\height_F(\ZZ^d)=\ZZ$, $\height_F(F)=0$, and $\height_F(P)\subset\RR_+$. If $\Aff(F)$ bounds the half-space $H^+$, then with the notation introduced for \eqref{half-form},
$$
\height_F(x)=h(x)=\alpha_1x_1+\dots+\alpha_dx_d+\beta.
$$

A lattice polytope $P\subset\RR^d$ is a \emph{unimodular pyramid} over $Q$ if $P$ is a join of the polytope $Q$ and a lattice point. The polytope $Q$ serves as the base and the additional point serves as the apex of the pyramid $P$. If $\dim P=d$, then $P$ is a unimodular pyramid over $Q$ if $Q$ is a facet of $P$ and $\Lat(P)\setminus\Lat(Q)$ is a single point that has height $1$ over $Q$.

\subsection{Cones and Hilbert bases}\label{Cones}
A \emph{conical set} $C\subset\RR^d$ is a subset of $\RR^d$ for which $\lambda x+\mu y\in C$ whenever $x,y\in C$
and $\lambda,\mu\in\RR_+$. A \emph{cone} means a \emph{finitely generated, rational,} and \emph{pointed conical set}. That is, a cone $C\subset\RR^d$ is a subset such that $C=\RR_+x_1+\cdots+\RR_+x_n$ for some $x_1,\ldots,x_n\in\ZZ^d$ and there is no nonzero element $x\in C$ with $-x\in C$.

For a cone $C$, the additive submonoid $\Lat(C)\subset\ZZ^d$ has a unique minimal generating set, which is the set of \emph{indecomposable elements} of the (additive) submonoid $\Lat(C)\subset\ZZ^d$. This set is called the \emph{Hilbert basis} of the cone $C$ and denoted by $\Hilb(C)$.

A cone $C$ is called \emph{unimodular} if $\Hilb(C)$ is a part of a basis of $\ZZ^d$. Equivalently, $C$ is unimodular if $\Hilb(C)$ is a linearly independent set.

The \emph{faces} of a cone $C\subset\RR^d$ are the intersections of type $H\cap C$ for a homogeneous half-space $H^+\subset\RR^d$, containing $C$. Also $C$ is a face of $C$.  Among the faces of $C$ we have the \emph{extremal rays} and \emph{facets}.

A non-zero lattice vector $x\in\ZZ^d$, $x\neq0$, is called \emph{primitive} if it is the generator of the monoid $\Lat(\RR_+v)\cong\ZZ_+$. This holds if and only if the coordinates of $x$ are coprime.  The primitive lattice vectors in the extremal rays of a cone $C$ are called the \emph{extremal generators} of $C$. They form a part of $\Hilb(C)$.

Assume $d>0$. For a facet $F$ of a $d$-cone $C\subset\RR^d$ there is a unique linear map, the \emph{face-height function,} $\height_F:\RR^d\to\RR$ such that $\height_F(F)=0$, $\height_F(\ZZ^d)=\ZZ$, and $\height_F(C)=\RR_+$. The last two equalities are equivalent to the condition that $\height_F(\Lat(C))=\ZZ_+$.

Every lattice polytope $P\subset \RR^d$ defines the cone $C(P)\subset\RR^{d+1}$ as follows. One embeds $P$ into $\RR^{d+1}$ by identifying $x\in P$ with the point $(x,1)\in P\times\{1\}$ and chooses $C(P)$ as the union of the rays originating from $0$ and passing through a point of $P\times\{1\}$. Then $C(P)$ is generated by the vectors $(x,1)$, $x\in \vertex(P)$, and these vectors are the extremal generators of $C(P)$.

We call the process of attaching the $(d+1)$st coordinate $1$ the \emph{homogenization of coordinates}. In order to facilitate it, we set $x'=(x,1)$ for $x\in\RR^d$. For $y\in\RR^{d+1}$ the $(d+1)$st coordinate is called its \emph{degree}.

\subsection{Simplicial cones and simplices}\label{Simplicial}
A cone of dimension $d$ is \emph{simplicial} if it has exactly $d$ extremal rays, or, equivalently, its extremal generators are linearly independent.

Let $v_1,\dots,v_d\in \ZZ^d$ be linearly independent and let $C=\RR_+v_1+\dots+\RR_+v_d$ be the simplicial cone spanned by them. Moreover, let $U=\ZZ v_1+\dots+\ZZ v_d$ be the sublattice  and $M=\ZZ_+v_1+\dots+\ZZ_+v_d$ be the affine monoid generated by  $v_1,\dots,v_d$. The group $U$ acts on $\RR^d$ by translations, and a fundamental domain of this action is
$$
\para(v_1,\dots,v_d)=\{a_1v_1+\dots+a_dv_d: 0\le a_i<1,\ i=1,\dots,d\}.
$$
The set
$$
\Lpara(v_1,\dots,v_d)=\para(v_1,\dots,v_d)\cap \ZZ^d
$$
of the lattice points represents the orbits of $U$ in $\ZZ^d$, or, in other words, the residue classes of $\ZZ^d$ modulo $U$. Based on these observations one easily proves \cite[Prop. 2.43]{Kripo}:

\begin{proposition}\label{SimpMod}
With the notation just introduced, the following hold:
\begin{enumerate}[{\rm(a)}]
\item $E=\Lpara(v_1,\dots,v_d)$ is a system of generators of the $M$-module $\Lat(C)$ (in the self-explanatory terminology);
\item $(x+M)\cap(y+M)=\emptyset$ for $x,y\in E$, $x\neq y$;
\item $\#E=\bigl[\ZZ^d:U\bigr]$;
\item $\Hilb(C)\subset \{v_1,\dots,v_d\}\cup E$.
\end{enumerate}
\end{proposition}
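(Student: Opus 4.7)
The plan is to use the half-open parallelepiped $\para(v_1,\dots,v_d)$ as a system of coset representatives for the action of $U$ on $\RR^d$ by translation. Since $v_1,\dots,v_d$ are linearly independent, every $z\in C$ admits a unique expansion $z=\sum_{i=1}^d a_iv_i$ with $a_i\ge0$, and I split each coefficient as $a_i=\lfloor a_i\rfloor+\{a_i\}$ with $\{a_i\}\in[0,1)$. This yields the decomposition
$$
z=\sum_{i=1}^d\lfloor a_i\rfloor v_i+\sum_{i=1}^d\{a_i\}v_i,
$$
in which the first summand lies in $M$ and the second lies in $\para(v_1,\dots,v_d)$.

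For (a), assume additionally that $z\in\Lat(C)$. The first summand is a $\ZZ$-combination of the $v_i$, hence lies in $\ZZ^d$, so the second summand also lies in $\ZZ^d$, i.e., in $E$. This expresses $z$ as an element of $E+M$, proving that $E$ generates $\Lat(C)$ as an $M$-module.

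For (b), suppose $x+m=y+m'$ with $x,y\in E$ and $m,m'\in M$. Then $x-y=m'-m\in U$, so the coordinates of $x-y$ in the basis $v_1,\dots,v_d$ are integers. On the other hand, writing $x=\sum\alpha_iv_i$ and $y=\sum\beta_iv_i$ with $\alpha_i,\beta_i\in[0,1)$, each coordinate $\alpha_i-\beta_i$ lies in $(-1,1)$. The only integer in $(-1,1)$ is $0$, forcing $x=y$. For (c), the same decomposition argument shows that every $z\in\ZZ^d$ is congruent modulo $U$ to the element $\sum\{a_i\}v_i\in E$, so the residue classes of $\ZZ^d/U$ are in bijection with $E$; uniqueness of the representative is exactly the argument just given. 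Hence $\#E=[\ZZ^d:U]$.

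For (d), let $z\in\Hilb(C)$ and apply (a) to write $z=x+\sum n_iv_i$ with $x\in E$ and $n_i\in\ZZ_+$. Indecomposability of $z$ in the monoid $\Lat(C)$ means that in any expression of $z$ as a sum of elements of $\Lat(C)$, at most one summand is nonzero. If $x\neq 0$ then all $n_i=0$ and $z=x\in E$; if $x=0$ then $\sum n_i=1$ and $z=v_i$ for some $i$. In either case $z\in\{v_1,\dots,v_d\}\cup E$. The whole argument is essentially bookkeeping with the fundamental domain $\para$; the only mildly subtle point is (d), where one must parse "indecomposable" carefully and observe that $E$ and $M\setminus\{0\}$ both consist of elements of $\Lat(C)\setminus\{0\}$, so any nontrivial splitting $z=x+m$ with $x\in E\setminus\{0\}$ and $m\in M\setminus\{0\}$ would witness decomposability.
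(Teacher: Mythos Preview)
Your proof is correct and follows precisely the approach the paper indicates: the paper does not spell out a proof of this proposition but simply remarks that it follows from the observation that $\para(v_1,\dots,v_d)$ is a fundamental domain for the $U$-action on $\RR^d$, citing \cite[Prop.~2.43]{Kripo}. Your splitting $a_i=\lfloor a_i\rfloor+\{a_i\}$ is exactly the standard way to make that observation effective, and your treatment of (d) is clean and complete.
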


Since we are usually interested in the cone $C$ spanned by $v_1,\dots,v_d$, we can and will assume that $v_1,\dots,v_d$ are the extremal generators of $C$. The semi-open parallelotope $\para(v_1,\dots,v_d)$ is called the \emph{basic parallelotope} of $C$, and we call
$$
\mu(C)=\bigl[\ZZ^d:U\bigr]
$$
the \emph{multiplicity} of $C$. One has $\mu(C)=\vol(S)$ where $S$ is the \emph{basic simplex} with vertices $0,v_1,\dots,v_d$.

Let $F$ be a facet of $C$ and let $v$ be the extremal generator opposite to $F$. Then we have
$$
\mu(C)=\height_F(v)\mu(F);
$$
see \cite[Prop. 3.9]{Kripo}. This formula reflects a stratification of $\Lpara(v_1,\dots,v_d)$:

\begin{proposition}\label{ht-points}
$\Lpara(v_1,\dots,v_d)$ contains exactly $\mu(F)$ lattice points of height $j$ over $F$, $j=0,\dots,\height_F(v)-1$.
\end{proposition}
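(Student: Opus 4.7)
My plan is to analyze $\Lpara(v_1,\dots,v_d)$ via the quotient group $\ZZ^d/U$, where $U=\ZZ v_1+\dots+\ZZ v_d$. Without loss of generality, take $v=v_d$, so $F$ is spanned by $v_1,\dots,v_{d-1}$; then $\height_F(v_i)=0$ for $i<d$ and $\height_F(v_d)=N$, where $N:=\height_F(v)$. For $x=\sum a_iv_i\in\Lpara(v_1,\dots,v_d)$, the height $\height_F(x)=a_dN$ must be an integer in $\{0,1,\dots,N-1\}$, since $x\in\ZZ^d$ forces $a_dN\in\ZZ$ while $a_d\in[0,1)$. So the stratification by height is a partition with at most $N$ blocks.

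The key step is to descend $\height_F$ to a homomorphism $\overline{\height_F}\colon\ZZ^d/U\to\ZZ/N\ZZ$. Well-definedness is immediate from $\height_F(U)=N\ZZ$ (as $\height_F(v_i)=0$ for $i<d$), and surjectivity from $\height_F(\ZZ^d)=\ZZ$. To compute its kernel, I first identify $U\cap\Aff(F)=U_F:=\ZZ v_1+\dots+\ZZ v_{d-1}$: an integer combination $\sum n_iv_i$ lies in $\Aff(F)=\RR v_1+\dots+\RR v_{d-1}$ precisely when $n_d=0$, because $v_1,\dots,v_d$ are linearly independent. The standard isomorphism theorem then gives
\[
\ker\overline{\height_F}\;\cong\;(\ZZ^d\cap\Aff(F)+U)/U\;\cong\;(\ZZ^d\cap\Aff(F))/U_F,
\]
which has order $\mu(F)$ by the very definition of the multiplicity of $F$. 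Hence each of the $N$ fibers of $\overline{\height_F}$ has exactly $\mu(F)$ elements.

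To conclude, I invoke Proposition~\ref{SimpMod}(b),(c) to see that $\Lpara(v_1,\dots,v_d)$ is a complete set of coset representatives for $U$ in $\ZZ^d$. Each representative has a unique height in $\{0,1,\dots,N-1\}$, and this height, reduced modulo $N$, is exactly its image under $\overline{\height_F}$. Thus the number of elements of $\Lpara(v_1,\dots,v_d)$ of height $j$ equals the fiber size $\mu(F)$ for every $j\in\{0,\dots,N-1\}$, and the argument simultaneously reproduces the multiplicity formula $\mu(C)=N\mu(F)$ cited above. The only delicate point is the identification $U\cap\Aff(F)=U_F$, which is precisely where simpliciality (linear independence of the extremal generators) is used; once that is in hand, the rest is orbit counting.
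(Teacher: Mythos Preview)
Your proof is correct and follows essentially the same approach as the paper: both reduce $\height_F$ modulo $N=\height_F(v)$ to obtain a surjective homomorphism $\ZZ^d/U\to\ZZ/N\ZZ$ and then count the fiber sizes, using that $\Lpara(v_1,\dots,v_d)$ is a set of coset representatives for $\ZZ^d/U$. The only difference is in how the fiber size is obtained: the paper invokes the formula $\mu(C)=N\mu(F)$ (stated just before the proposition) to get $\mu(C)/N=\mu(F)$, whereas you compute the kernel directly via the second isomorphism theorem and the identification $U\cap\Aff(F)=U_F$, thereby re-deriving that formula along the way. Your version is slightly more self-contained, but the underlying idea is the same.
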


\begin{proof}
Let $m=\height_F(v)$. The group homomorphism
$$
p:\ZZ^d\to\ZZ/m\ZZ,\quad p(x)=\height_F(x) \pmod {m\ZZ},
$$
factors through $\ZZ^d/U$. Thus each class in $\ZZ^d/U$ decomposes into $\mu(C)/m$ classes that have the same height over $F$ modulo $m$.
For each $j=0,\dots,m-1$ we must have $\mu(F)$ such classes.
\end{proof}

Let $\Delta$ be a lattice $d$-simplex. Then $C(\Delta)$ is a simplicial cone, and we may write $\para(\Delta)$ for the basic parallelotope of $C(\Delta)$ and $\Lpara(\Delta)$ for its lattice points. Note that
$$
\mu(C(\Delta))=\vol_d(\Delta).
$$
The nonzero points in $\Lpara(\Delta)$ are stratified into layers of constant degree. Clearly the maximum degree in $\Lpara(\Delta)$ is at most $d$ and the minimum nonzero degree is at least $1$.  There seems to be no complete description of this stratification, but in special cases one has more information.

We are particularly interested in the case in which $\Delta$ is \emph{empty}: the only lattice points of $\Delta$ are its vertices. In this case one can say a little more:

\begin{lemma}\label{empty}
Suppose $\Delta$ is an empty simplex. Then $\Lpara(\Delta)$ has no points in degrees $1$ and $d$. In particular, if $\dim \Delta=3$, then the nonzero elements of $\Lpara(\Delta)$ live in degree $2$.
\end{lemma}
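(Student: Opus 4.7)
Write $\Delta = \conv(v_0,\ldots,v_d)$ and set $g_i = (v_i,1)\in\ZZ^{d+1}$, so that $g_0,\dots,g_d$ are the extremal generators of the simplicial cone $C(\Delta)$. Every point of $\para(\Delta)$ has a unique representation $z = \sum_{i=0}^d a_i g_i$ with $0\le a_i<1$, and its degree equals $\sum_i a_i$, which is a nonnegative integer strictly less than $d+1$. Thus the possible degrees of elements of $\Lpara(\Delta)$ are $0,1,\dots,d$, with $0$ achieved only by the origin.

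\emph{No degree~$1$ points.} A lattice point $z\in\Lpara(\Delta)$ of degree $1$ has the form $z = (x,1)$ with $x\in\Lat(\Delta)$. Moreover $x$ cannot be a vertex of $\Delta$: the unique representation of $g_i = (v_i,1)$ in the generators has coefficient $a_i = 1$, which violates $a_i<1$, so vertices do not lie in $\para(\Delta)$. Hence any degree~$1$ point produces a non-vertex lattice point of $\Delta$, contradicting emptiness.

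\emph{No degree~$d$ points.} Suppose $z = \sum a_i g_i\in\Lpara(\Delta)$ has degree $d$, i.e.\ $\sum a_i = d$. If some $a_j = 0$, then the remaining $d$ coefficients, each strictly less than $1$, would sum to $d$, which is impossible. So all $a_i$ lie in $(0,1)$. Consider the involution
\[
z \;\longmapsto\; z^{*} \;=\; \sum_{i=0}^d g_i - z \;=\; \sum_{i=0}^d (1-a_i)\,g_i.
\]
Since $0<1-a_i<1$, the point $z^{*}$ again lies in $\para(\Delta)$, and it is a lattice point because $\sum g_i = (v_0+\cdots+v_d,\,d+1)$ is integral. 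Its degree is $(d+1)-d = 1$, contradicting the previous step.

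Thus $\Lpara(\Delta)$ is concentrated in degrees $0,2,3,\dots,d-1$. For $d=3$ this forces every nonzero element of $\Lpara(\Delta)$ into degree $2$. The only genuinely delicate point in the argument is verifying that a degree-$d$ point automatically has all coefficients $a_i$ strictly positive, so that the involution $z\mapsto \sum g_i - z$ stays inside the semi-open parallelotope; everything else is bookkeeping with the standard uniqueness of representations in a simplicial cone.
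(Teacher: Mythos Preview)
Your proof is correct and follows essentially the same route as the paper: degree~$1$ is excluded directly by the definition of empty, and degree~$d$ is excluded via the point reflection $z\mapsto \sum_i g_i - z$ at the midpoint of $\para(\Delta)$, which swaps degrees $1$ and $d$. Your version is simply more explicit than the paper's, in particular in spelling out why all coefficients of a degree~$d$ point must be strictly positive so that the reflection lands back in the semi-open parallelotope.
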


\begin{proof} That there are no points in degree $1$ is the definition of `empty', and that there are no points of degree $d$ follows if one applies the point reflection $\rho:\RR^d\to\RR^d$ at the midpoint of $\para(\Delta)$
$$
\rho(x)=(v_1'+\dots+v_{d+1}')-x,
$$
where $v_1,\dots,v_{d+1}$ are the vertices of $\Delta$.
\end{proof}

\section{Normal polytopes}\label{Normal polytopes}

In this section we introduce the class of normal polytopes, recall basic facts and several explicit families. We also define an order structure on the set of normal polytopes in $\RR^d$.

\subsection{Normal polytopes}\label{Facts}
\begin{definition}\label{normal-polytope}
A polytope $P\subset\RR^d$ is \emph{normal} if, for all $c\in\NN$, one has
$$
\Lat(cP)=\{x_1+\cdots+x_c\ :\ x_1,\ldots,x_c\in\Lat(P)\}.
$$
\end{definition}

The continuous version of Definition \ref{normal-polytope} is the equality
$$
cX=\underbrace{X+\cdots+X}_c,\qquad c\in\NN,
$$
satisfied for any convex subset $X\subset\RR^d$, where the left hand side is the $c$-th dilation and the right hand side the Minkowski sum of $c$ copies of $X$.

This version of `normal' is used in many sources. It is called `integrally closed' in \cite[Ch. 2]{Kripo} where `normal' was used for a weaker property, namely the normality of the monoid $M(P)$ defined below. Further, in \cite[p.~4]{CHHH} the authors distinguish the \emph{Integral Decomposition Property} (IDP) from normality. The first one is referring to the ambient lattice, as we do, the second one to the lattice generated by integral points of the polytope.  
In this paper we prefer the more succinct `normal' to the more algebraically oriented `integrally closed'.

By Pick's 19th century theorem \cite[Cor. 2.54]{Kripo} all lattice polygons are normal. But in high dimensions, starting with 3, the normal polytopes form a small portion of all lattice polytopes.

The next theorem encapsulates some basic facts about normal polytopes, the parts (a), (b), (c) and (d) being direct consequences of Definition \ref{normal-polytope} and the parts (e), (f), (g), (h) and (i) being proved in \cite[2.81]{Kripo}, \cite[2.57]{Kripo}, \cite[3.1]{BrGuNCov}, \cite{Pay} and \cite{GuLong}, respectively.

\begin{theorem}\label{Encapsulates}
\begin{enumerate}[{\rm (a)}]
\item
A lattice polytope that is unimodularly equivalent to a normal polytope, is normal.
\item
If $P$ is a union of normal polytopes then $P$ is normal.
\item
If $P$ is normal then every face of $P$ is normal.
\item
Cartesian products and joins of normal polytopes are normal. Unimodular pyramids over normal polytopes are normal; in particular, unimodular simplices are normal.
\item
If $P$ is normal then for every complete flag of faces
$$
\FF:\quad F_0\subset F_1\subset\ldots\subset F_{d-1}\subset P,\qquad d=\dim P,
$$
there exists an $\FF$-incident unimodular $d$-dimensional simplex $\Delta\subset P$, i.e.,
$$
\dim(\Delta\cap F_i)=i,\qquad i=1,\dots,d-1.
$$
\item For any lattice polytope $P$, the dilated polytopes $cP$ are normal as soon as $c\ge\dim P-1$.
\item Lattice parallelotopes (not necessarily rectangular) of any dimension are normal.
\item A full dimensional lattice polytope $P\subset\RR^d$ is normal if the primitive normal vectors to the facets of $P$ form a subset of a root system, whose irreducible summands are of type $A$, $B$, $C$, or $D$.
\item A lattice polytope $P$ is normal if its every edge contains at least $4d(d+1)+1$ lattice points. When $P$ is a lattice simplex this bound can be lowered to $d(d+1)+1$.
\end{enumerate}
\end{theorem}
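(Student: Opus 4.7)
The plan is to split the nine items into the elementary four, (a)--(d), which follow by direct manipulation of Definition~\ref{normal-polytope}, and the five substantive ones, (e)--(i), for which I would rely on the cited sources. In each of (a)--(d) the strategy is to push a candidate decomposition $y=x_1+\cdots+x_c$ through the relevant construction, using that lattice points, Minkowski sums, and dilations all behave well under the operations in question.

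For (a), writing a unimodular equivalence as $f(x)=Ax+b$ with $A\in\mathrm{GL}_d(\ZZ)$ and $b\in\ZZ^d$ and lifting to the integral linear automorphism $\tilde f(x,t)=(Ax+tb,t)$ of $\ZZ^{d+1}$ identifies the monoids $\Lat(C(P))$ and $\Lat(C(Q))$, from which normality transports. For (b), any $y\in\Lat(cP)$ satisfies $y/c\in P_i$ for some $i$, so normality of $P_i$ decomposes it using points of $\Lat(P_i)\subseteq\Lat(P)$. For (c), if a face $F$ is cut out on $P$ by an affine form $h\ge 0$ vanishing on $F$, and $y=x_1+\cdots+x_c\in cF$ with $x_j\in\Lat(P)$, then $\sum_j h(x_j)=h(y)=0$ forces each $h(x_j)=0$ and hence $x_j\in\Lat(F)$. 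For (d), Cartesian products decompose coordinatewise; in $\join(P,Q)\subset\RR^{d+e+1}$ the $(d{+}1)$st coordinate of a lattice point in the $c$th dilation allocates it between the two canonical blocks, after which normality of $P$ and $Q$ separately finishes the job. Unimodular pyramids are joins of the base with a single lattice point, and unimodular simplices arise by iterating from a point.

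For (e) I would argue inductively along the flag, using (c) to reduce to $F_{d-1}$ and then picking a lattice point at height $1$ over $F_{d-1}$ whose existence in $\Lat(P)$ is supplied by normality via the Hilbert basis of $C(P)$. Parts (f) and (g) are classical: the scaling bound is the Kempf--Knudsen--Mumford--Saint-Donat theorem, while the parallelotope statement is immediate from the fundamental parallelotope language of Section~\ref{Simplicial}, since every residue class modulo the edge lattice of a lattice parallelotope is represented by a vertex. Part (h) exploits the existence of regular unimodular triangulations adapted to the root-system normals, a feature special to types $A$, $B$, $C$, $D$.

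The main obstacle is (i), the long-edge theorem of \cite{GuLong}, whose proof is genuinely intricate and not something I would try to reconstruct from a sketch; I would only verify that the stated constants $4d(d+1)+1$ and $d(d+1)+1$ match the published bounds and that the hypothesis is stated in the right ambient lattice.
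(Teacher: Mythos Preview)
The paper does not prove this theorem at all: it declares (a)--(d) to be direct consequences of Definition~\ref{normal-polytope} and cites \cite[2.81]{Kripo}, \cite[2.57]{Kripo}, \cite[3.1]{BrGuNCov}, \cite{Pay}, and \cite{GuLong} for (e)--(i). Your proposal therefore goes further than the paper itself, supplying actual sketches, and for (a)--(f), (h), (i) your outlines are sound and in line with the arguments in the cited sources. In particular your inductive sketch for (e) is exactly the mechanism behind \cite[2.81]{Kripo}: normality forces a degree~$1$ Hilbert basis element of height~$1$ over $C(F_{d-1})$, and coning the inductively obtained unimodular $(d-1)$-simplex over that point yields a unimodular $d$-simplex.

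There is, however, a genuine error in your justification of (g). The claim that ``every residue class modulo the edge lattice of a lattice parallelotope is represented by a vertex'' is false: for the square with edge vectors $(2,0)$ and $(0,2)$ the edge lattice is $2\ZZ^2$ of index~$4$, yet all four vertices lie in the same residue class. More generally a lattice parallelotope has $2^d$ vertices but index equal to its normalized volume, and these numbers are unrelated. The cone over a parallelotope is also not simplicial, so the machinery of Section~\ref{Simplicial} does not apply directly. The actual argument (see \cite[3.1]{BrGuNCov} or \cite[2.77]{Kripo}) is different: given $y\in\Lat(cP)$ with $P=\{\sum a_iv_i:0\le a_i\le 1\}$, one writes $y=\sum\alpha_iv_i$ with $0\le\alpha_i\le c$ and subtracts the vertex $w=\sum_{i:\alpha_i\ge 1}v_i\in\Lat(P)$; then $y-w\in\Lat((c-1)P)$ because $y-w=\sum(\alpha_i-\lfloor\alpha_i\rfloor)v_i+\sum(\lceil\alpha_i\rceil-[\alpha_i\ge 1])v_i$ still has coefficients in $[0,c-1]$, and one finishes by induction on $c$. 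You should replace your sketch of (g) with this peeling argument or simply cite the reference as the paper does.
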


Call a lattice polytope $P\subset\RR^d$ \emph{smooth} if the primitive edge vectors at every vertex $v\in P$ form a part of a basis of $\ZZ^d$. The terminology is explained by the observation that the projective toric variety of a lattice polytope is smooth if and only if $P$ is smooth; see \cite[p. 371]{Kripo}. \emph{Oda's question} asks whether smooth lattice polytopes are normal; see, for instance, \cite{ZZ} and the references therein.

The recent extensive treatment \cite{HPPS} of unimodular triangulatons for various classes of normal polytopes presents the state of the art in the field.

For every lattice polytope $R$ the set $\Lat(R)'=\{x'=(x,1):x\in \Lat(P)\}$ generates an affine submonoid $M(R)$ of $\ZZ^{d+1}$, and the normality of $R$ (as used in this paper) is equivalent to $\Hilb(C(R))=\Lat(R)'$, or, in other words, to the equality
$$
\sum_{x\in\Lat(R)}\ZZ_+(x,1)=\Lat(C(R))\qquad (\subset\ZZ^{d+1}),
$$
of graded affine monoids, where the degree is chosen as introduced in Section \ref{Cones}.  We set $\overline M(R)=\Lat(C(R))$. In algebraic terms, it is the integral closure of $M(R)$ in $\ZZ^{d+1}$ (and in general larger than the normalization of $M(R)$, the integral closure in the sublattice generated by $M(R)$). See \cite[Ch. 2]{Kripo} for an extensive discussion.  The $k$-th degree components of the monoids just introduced will be denoted by $M(R)_k$ and $\overline M(R)_k$, respectively.

As a consequence of Lemma \ref{empty} one can show (see \cite[Th. 2.52]{Kripo}):

\begin{lemma}\label{HilbDeg}
Let $P$ be a lattice polytope. Then all elements of $\Hilb(P)$ have degree $\le d-1$.
\end{lemma}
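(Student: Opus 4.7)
The plan is to show that every $y \in \Lat(C(P))$ of degree $\ge d$ admits a decomposition $y = y_1 + y_2$ into two nonzero elements of $\Lat(C(P))$, which forces $\Hilb(C(P))$ to live in degrees $\le d-1$.

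The key tool is a lattice triangulation $T = \{\Delta_1,\ldots,\Delta_n\}$ of $P$ into \emph{empty} $d$-simplices. Such a $T$ can be obtained from any initial lattice triangulation by iterated stellar subdivisions at interior lattice points of the non-empty members; the process terminates because each subdivision strictly enlarges the set of vertices used, which is bounded by $\#\Lat(P)$. The simplicial cones $C(\Delta_i)$ cover $C(P)$, so each lattice point $y \in \Lat(C(P))$ lies in some $C(\Delta_i)$, and Proposition \ref{SimpMod}(a) yields a unique expression
$$
y = a_1 v_1' + \cdots + a_{d+1} v_{d+1}' + w, \qquad a_j \in \ZZ_+,\ w \in \Lpara(\Delta_i),
$$
where $v_1,\ldots,v_{d+1}$ are the vertices of $\Delta_i$. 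Comparing the $(d+1)$st coordinates gives $\sum_j a_j + \deg(w) = \deg(y)$.

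The endgame invokes Lemma \ref{empty}: since $\Delta_i$ is empty, $\Lpara(\Delta_i)$ contains no elements of degree $d$, so $\deg(w) \le d-1$ and hence $\sum_j a_j \ge \deg(y)-(d-1) \ge 1$ whenever $\deg(y) \ge d$. Choosing any index $j$ with $a_j \ge 1$, I split $y = v_j' + (y - v_j')$ into two nonzero elements of $\Lat(C(P))$, of degrees $1$ and $\deg(y)-1 \ge 1$ respectively. The main obstacle I foresee is producing the empty-simplex triangulation (this is a standard termination argument but does need to be made carefully); after that, the proof reduces to a short degree computation combined with Lemma \ref{empty}.
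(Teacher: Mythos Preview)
Your argument is correct and is exactly the standard proof the paper points to (it gives no proof itself, only the reference \cite[Th.~2.52]{Kripo}, where this triangulation-into-empty-simplices reduction combined with Lemma~\ref{empty} is carried out). On your flagged obstacle: read ``interior lattice points of the non-empty members'' as relative-interior points of faces of the simplicial complex---a non-empty simplex may carry its extra lattice point only on a proper face, so subdividing only at interior points of top cells can stall---or simply take any full lattice triangulation (e.g.\ a pulling triangulation on $\Lat(P)$); either way termination is immediate since each step adds a new lattice vertex.
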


\subsection{The poset $\np(d)$}\label{The poset}

\begin{definition}\label{npol(d)}
The partially ordered set $\np(d)$ is the set of normal polytopes in $\RR^d$, ordered as follows: $P<Q$ if and only if there exists a finite sequence of normal polytopes of the form

\begin{equation}\label{shrinking} \begin{aligned} &P=P_0\subset\ldots\subset
P_{n-1}\subset P_n=Q,\\ &\#\Lat(P_i)=\#\Lat(P_{i-1})+1,\qquad i=1,\ldots,n. \end{aligned}
\end{equation}
\end{definition}

One easily observes that, in the sequence above, if $\dim(P_i)=\dim(P_{i-1})+1$ then $P_i$ is a unimodular pyramid over $P_{i-1}$.

The importance of the poset $\np(d)$ is explained as follows. In the late 1980s, in an attempt to give a more succinct characterization of the normal point configurations, the following two distinguished conjectures were proposed in \cite{Sebo}, of which the second one had been already asked as a question in \cite{CFS}:

\smallskip\noindent\emph{Unimodular Cover (UC):} A lattice polytope $P$ is normal if and only if $P$ is the union of unimodular simplices.

\smallskip\noindent\emph{Integral Carath\'eodory Property (ICP):} A lattice polytope $P\subset\RR^d$ is normal if and only if for an arbitrary natural number $c\in\NN$ and an arbitrary integer point $z\in\Lat(cP)$ there exist integer points $x_1,\ldots,x_{d+1}\in\Lat(P)$ and integer numbers $a_1,\ldots,a_{d+1}\in\ZZ_+$ with $z=a_1x_1+\cdots+a_{d+1}x_{d+1}$ and $a_1+\cdots+a_{d+1}=c$.

\smallskip Informally, (UC) says that the continuity, modeled by normal polytopes, is piece-wise by nature, resulting from the constituent unimodular simplices. The (ICP) is an arithmetic version of (UC). (The original conjectures were formulated for general cones $C$ that are not necessarily of the form $C(P)$ for normal $P$, using $\Hilb(C)$ instead of $\Lat(P)$.)

The first indication of the relevance of the poset $\np(d)$ was the following observation in \cite{BrGuNCov}, without introducing the poset structure in $\np(d)$ explicitly: for both conjectures (UC) and (ICP) it is critical to check their validity on the \emph{minimal elements} of the poset $\np(d)$. The mentioned polytopes are called \emph{tight polytopes} in \cite{BrGuNCov}. The very existence of tight polytopes is not quite intuitive:
computational evidence shows that many descending sequences of the type (\ref{shrinking}) reversed lead to complete erasure of the initial normal polytope. However, tight polytopes have popped up in dimensions 4 and higher, and the larger the dimension the more frequently so. A counterexample to (UC) was finally found in \cite{BrGuNCov}. In \cite{BGHMW} it was shown that the example also disproves (ICP). That (ICP) is strictly weaker than (UC) was shown in \cite{BrICP}, where the same strategy of shrinking normal polytope was used with the following important refinement: if a shrinking process halts at a minimal
counterexample to (UC) then chances are that, somewhere along the descent path, the stronger property (UC) is lost before (ICP). This can be viewed as the second indication of the relevance of the poset $\np(d)$ in understanding the normality property.

Every normal 3-dimensional polytope that is comparable with a unimodular simplex within $\np(3)$, is covered by unimodular simplices, as follows immediately from \cite[Lemma 2.2]{BrGuNCov}. In particular, the lack of nontrivial minimal elements, if true, would imply that (UC) holds for normal polytopes of dimension $3$. (UC) is open in dimension $4$ as well, but there are nontrivial minimal elements in $\np(4)$ so that the same argument cannot work.

One easily generates infinitely many higher dimensional minimal normal polytopes from a single one. In fact, for any minimal element $P\in\np(d)$ and any element $Q\in\np(e)$ the product polytope $P\times Q$ is a minimal element of $\np(d+e)$. However, as this paper shows, the situation is very different for maximal normal polytopes -- so far we have been able to find only a handful (up to unimodular equivalence) maximal normal polytopes in $\np(4)$ and $\np(5)$.

\section{Lattice stratifications and quantum jumps}\label{Lattice-Stratification}

In this section we single out the elementary relations in the poset $\np(d)$ between two full dimensional polytopes as the main object of our study and show that, already in dimension 3, their arithmetic picture is quite involved.

\subsection{Large empty layers around polytopes}\label{Height-over-polytope} In the following it will be very convenient to say that a facet $F$ of a $d$-polytope $P\subset\RR^d$ is \emph{visible} from $x\in\RR^d\setminus P$ if for every $y\in F$ the line segment $[x,y]$ intersects $P$ exactly in $y$. Note that $\height_F(x)<0$ if and only if $F$ is visible from $x$ because the points in $P$ have nonnegative height by convention.

\begin{definition}\label{height-over-polytope}
Let $P\subset\RR^d$ be a full-dimensional lattice polytope.
\begin{enumerate}[{\rm (a)}]
\item For a point $z\in\ZZ^d\setminus P$ the \emph{height $\height_P(z)$ of $z$ over $P$} is defined by
$$
\height_P(z)=\max\bigl(-\height_F(z)\ :\ F\subset P\ \text{a facet, visible from}\ z\bigr).
$$
The points in $P$ have height $0$ over $P$.

\item For $j\in\ZZ_+$, the polytope $P^{-j}$ is defined by
$$
P^{-j}=\{x: \height_P(x)\le j\}.
$$
\item The \emph{lattice stratum of height} $j$ around $P$ is the subset
$\partial P^{-j}\cap\ZZ^d$.
\item The \emph{width} of $P$ with respect to a facet $F\subset P$ and the \emph{absolute width} are defined as follows
\begin{align*}
&\width_FP=\max\bigl(\height_F(x)\ :\ x\in P\bigr),\\
&\width P=\max\bigl(\width_FP\ :\ F\subset P\ \text{{a facet}}\bigr).\\
\end{align*}
\end{enumerate}
\end{definition}

The term `stratum' above is justified: one has the stratification
\begin{equation}\label{lattice-stratification}
\ZZ^d\setminus P=\bigcup_{j=1}^\infty\big(\Lat(\partial
P^{-j}).
\end{equation}
Informally, $\Lat(\partial P^{-j})$ consists of lattice points outside of the polytope $P$ on `lattice distance' $j$ from $P$. The
polytopes $P^{-j}$ are rational polytopes, but usually \emph{not} lattice polytopes. In fact, as we will see below, $\Lat(\partial
P^{-j})$ can very well be empty.
\begin{remark}
If $P$ is a normal polytope then it defines a normal projective toric variety $X$ together with a very ample line bundle $\LLL$ providing a projectively normal embedding. The points $\Lat(P)$ correspond to a basis of global sections $H^0(X,\LLL)$ \cite[Ch.~10.B]{Kripo}\cite[Ch.~4.3]{CLS}. Let $K$ be the canonical (Weil) divisor on $X$. Points of $\Lat(P^{-j})$ correspond to a basis of global sections $H^0(X,\LLL-jK)$. In particular, if some strata are empty this is equivalent to the fact that by adding the (effective) divisor $-K$ we do not obtain any new global sections.

Many of our results may be interpreted in this language. For example, Theorem \ref{infinitestrata} below implies that there is no lower bound on $j$, even for normal toric threefolds $X$ with a very ample line bundle $\LLL$, guaranteeing the inequality $h^0(X,\LLL)<h^0(X,\LLL-jK)$.
\end{remark}

Two easy observations:
\begin{enumerate}[{\rm (i)}]
\item
If a point $z\in\ZZ^d\setminus P$ is at the smallest possible positive height above $P$, then
$$
\Lat(\conv(P,z))=\Lat(P)\cup\{z\}.
$$
\item
$\width_F P$ is always attained at a vertex of $P$ not in $F$. It is positive since $P\neq F$ .
\end{enumerate}

Without any constraints on a lattice point $z$ and a lattice polytope $P$, except the requirement $\Lat(\conv(P,z))=\Lat(P)\cup\{z\}$, there is no upper bound for the heights $\height_P(z)$, not even for normal $3$-polytopes $P$. The simplest such example is the unit tetrahedron $P=\conv(0,\e_1,\e_2,-\e_3)$ and the points $z_k=\e_1+\e_2+k\e_3$: we have $\height_P(z_k)=k$. Although one should note that none of the lattice strata around a unimodular simplex of any dimension is empty.

Our next result shows that there is no dimensionally uniform upper bound for the height of the \emph{lowest} lattice points above lattice polytopes, not even in the class of normal polytopes, and not even in dimension 3.

\begin{figure}[hbt]
\begin{center}
\tikzset{facet style/.style={opacity=1.0,very thick,line,join=round}}
\begin{tikzpicture}[y  = {(-0.4cm,-0.6cm)},
                    z  = {(0.9659cm,-0.25882cm)},
                    x  = {(0cm,0.8cm)},
                    scale = 2]
\draw [->,dashed] (-1.5, 0, 0) -- (2,0,0) node at (2.3,0,0) {$x_1$};

\draw [->,dashed] (0, -2.5, 0) -- (0,2.5,0) node at (0,2.8,0) {$x_2$};

\draw [->,dashed] (0, 0, -3.5) -- (0,0,3.5) node at (0,0,3.7){$x_3$};

\draw[thick] (1,0,0) -- (0,2,0) -- (0,0,3) -- cycle;
\draw[very thin] (-1,0,0) -- (0,2,0) -- (0,0,3) -- cycle;
\draw[thick] (1,0,0) -- (0,-2,0) -- (0,0,3) -- cycle;
\draw[very thin] (-1,0,0) -- (0,-2,0) -- (0,0,3) -- cycle;
\draw[thick] (1,0,0) -- (0,2,0) -- (0,0,-3) -- cycle;
\draw[very thin] (-1,0,0) -- (0,2,0) -- (0,0,-3) -- cycle;
\draw[thick] (1,0,0) -- (0,-2,0) -- (0,0,-3) -- cycle;
\draw[very thin] (-1,0,0) -- (0,-2,0) -- (0,0,-3) -- cycle;
\end{tikzpicture}
\caption{The polytope of Theorem \ref{infinitestrata}}
\end{center}
\end{figure}
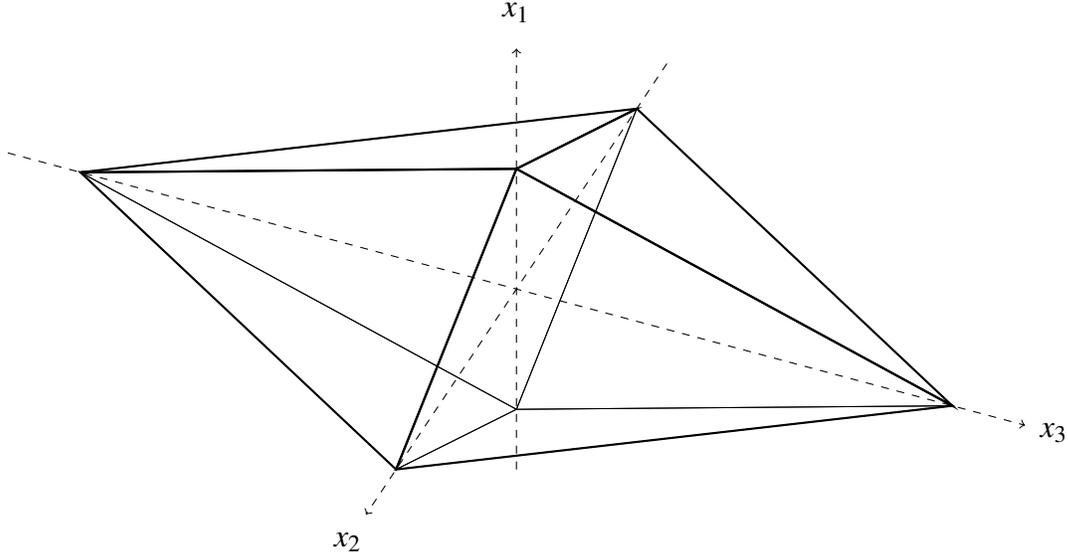

\begin{theorem}\label{infinitestrata}
There is a sequence of normal 3-polytopes $P_k\subset\RR^3$ and lattice points $z_k\in\ZZ^d\setminus P_k$, $k\in\NN$, such that for every index $k$ we have:
\begin{enumerate}[{\rm (a)}]
\item
$\width P_k=2k(k+1)(k^2+k+1)$,
\item
the lattice strata around $P_k$ up to height $k-1$ ($\approx\sqrt[4]{\frac12\width P_k}$ as $k\to\infty$) are all empty.
\item $\height_P(z_k)=k$ and $\conv(P_k,z_k)\in\np(d)$.
\end{enumerate}
\end{theorem}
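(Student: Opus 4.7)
The plan is to take
\[
P_k=\conv\bigl(\pm k\e_1,\pm(k+1)\e_2,\pm(k^2+k+1)\e_3\bigr),
\]
writing $a=k$, $b=k+1$, $c=k^2+k+1$ for the three pairwise coprime axial parameters. Since $\gcd(bc,ac,ab)=1$, the facet $F=\conv(a\e_1,b\e_2,c\e_3)$ has primitive facet-height function $h_F(x)=abc-bcx_1-acx_2-abx_3$; evaluating $-h_F$ at the opposite vertex $-k\e_1$ gives $\width_F P_k=2abc=2k(k+1)(k^2+k+1)$, and the symmetry $x_i\mapsto -x_i$ shows that all eight facets yield the same width. This proves (a).

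For (b) the same symmetry reduces the claim to: every $z=(x_1,x_2,x_3)\in\ZZ_{\ge 0}^3\setminus P_k$ satisfies $-h_F(z)\ge k$. The facet $F$ is visible from any such $z$, so $\height_{P_k}(z)\ge -h_F(z)$. Setting $j=-h_F(z)$, the equation
\[
(k+1)(k^2+k+1)x_1+k(k^2+k+1)x_2+k(k+1)x_3=abc+j
\]
reduced modulo each of the three pairwise coprime numbers $k$, $k+1$, and $k^2+k+1$ (each of which annihilates two of the three coefficients on the left) forces
$x_1\equiv j\pmod k$, $x_2\equiv -j\pmod{k+1}$, $x_3\equiv -j\pmod{k^2+k+1}$.
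For $0<j<k$ the smallest nonnegative representatives are $x_1=j$, $x_2=k+1-j$, $x_3=k^2+k+1-j$, and a short substitution shows that the left-hand side then equals $2abc+j$. Each of the three elementary reductions $x_1\mapsto x_1-k$, $x_2\mapsto x_2-(k+1)$, $x_3\mapsto x_3-(k^2+k+1)$ decreases the left-hand side by exactly $abc$ but drives the corresponding coordinate below zero. Hence no nonnegative integer solution exists when $0<j<k$, which is (b).

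For (c) I propose $z_k=(0,1,k^2+1)$. A direct substitution yields $h_F(z_k)=-k$, and a case check of the remaining seven facets shows that the only other facet visible from $z_k$ is $F'=\conv(-a\e_1,b\e_2,c\e_3)$, with $h_{F'}(z_k)=-k$ as well. Hence $\height_{P_k}(z_k)=k$, as required.

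The main obstacle is proving normality of both $P_k$ and $Q_k:=\conv(P_k,z_k)$: none of the standard sufficient conditions in Theorem \ref{Encapsulates} applies, since the facet normals $(\pm bc,\pm ac,\pm ab)$ do not lie in any root system of type $A$, $B$, $C$, or $D$, the edges have lattice length $1$, and $P_k$ is not a join. For $P_k$ I would construct an explicit unimodular triangulation by pulling from the six axial vertices in succession, using the pairwise coprimalities of $a$, $b$, $c$ to keep the resulting simplices unimodular, then apply Theorem \ref{Encapsulates}(b). For $Q_k$ the new region $Q_k\setminus P_k$ consists, by (b), of the two empty tetrahedra $\conv(F,z_k)$ and $\conv(F',z_k)$, each of normalized volume $k$; since these are not unimodular for $k\ge 2$, no unimodular triangulation of $Q_k$ exists, and I would verify normality at the semigroup level by writing every lattice point of $nQ_k$ as $m\,z_k+u$ with $u$ a sum of $n-m$ elements of $\Lat(P_k)$, reducing the problem to the normality of $P_k$.
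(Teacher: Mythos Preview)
Your construction of $P_k$, the choice $z_k=(0,1,k^2+1)$, and your arguments for (a), (b), and the height computation in (c) coincide with the paper's; your modular-arithmetic treatment of (b) is in fact a cleaner rendering of the same idea.

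The genuine gap is exactly where you flag it: the normality of $P_k$ and of $Q_k=\conv(P_k,z_k)$. Your proposed route for $P_k$---a unimodular triangulation obtained by ``pulling from the six axial vertices''---cannot work as stated: a triangulation using only the six vertices produces simplices of normalized volume $abc$ or $2abc$, not $1$, and any unimodular triangulation must use the many interior lattice points of $P_k$ as vertices. The paper instead decomposes $P_k$ into the eight orthant tetrahedra $\Delta_k=\conv(0,k\e_1,(k+1)\e_2,(k^2+k+1)\e_3)$, observes that $\conv(0,k\e_1,(k+1)\e_2,\e_3)$ is a unimodular pyramid over a (normal) lattice triangle, and then invokes \cite[Th.~1.6]{BrGuRect} (a stretching result for rectangular simplices, using that $k^2+k+1=k(k+1)+1$) to conclude that $\Delta_k$ is normal; Theorem~\ref{Encapsulates}(b) then gives normality of $P_k$. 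This external citation is the step you are missing.

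For $Q_k$ your plan is too vague to be a proof, and the paper's route is both short and specific: by the reflection $x_1\mapsto -x_1$ it suffices to show $\conv(\Delta_k,z_k)$ is normal, and this follows from the $3$-dimensional jump criterion Theorem~\ref{Dim3}. Since $F$ is the only facet of $\Delta_k$ visible from $z_k$ and $\mu(F)=1$, one needs a single lattice point of $P_{z_k,F}=\Delta_k$ at each height $1,\dots,k-1$ over $F$; the points $y_j=(k-j,j,j)$ do the job. Once you have Theorem~\ref{Dim3} available, this replaces your proposed semigroup computation entirely.
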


As mentioned in  observation (i) above, in part (c) we also have $\Lat(\conv(P_k,z_k)\setminus P_k)=\{z_k\}$.

\begin{proof}
Consider the cross-polytopes
\begin{align*}
P_k=\conv\big(\pm k\e_1, \pm(k+1)\e_2,\pm(k^2+k+1)\e_3\big)&\subset\RR^3,\qquad k\in\NN.
\end{align*}
Each $P_k$ is the union of eight congruent copies of  the rectangular tetrahedron
$$
\Delta_k=\conv\big(0,k\e_1, (k+1)\e_2,(k^2+k+1)\e_3\big)\subset\RR^3.
$$
For every $k$ the tetrahedron
$$
\conv\big(0,k\e_1, (k+1)\e_2,\e_3\big),\quad k\in\NN,
$$
is a unimodular pyramid over the right triangle $\conv\big(0,k\e_1, (k+1)\e_2\big)$ and so is normal. Therefore, by \cite[Th. 1.6]{BrGuRect}, the tetrahedra
\begin{align*}
\Delta_k=\conv\big(0,k\e_1, (k+1)\e_2,(k(k+1)+1)\e_3\big)
\end{align*}
are normal for all $k\in\NN$. By Theorem \ref{Encapsulates}(a), the cross-polytopes $P_k$ are normal for all $k\in\NN$.

To complete the proof of (b), because of reasons of symmetry between the coordinate orthants in $\RR^3$, it is enough to show that
\begin{equation}\label{empty-positive-strata}
\min\big(-\height_{F_k}(z)\ :\ z\in\ZZ_+^d\setminus\Delta_k\big)\ge k,\quad k\in\NN,
\end{equation}
where $F_k$ is the facet of $\Delta_k$ opposite to $0$. The corresponding height function $\height_{F_k}:\ZZ^3\to\ZZ$ is given by
\begin{align*}
(\xi_1,\xi_2,\xi_3)\mapsto-(k+1)(k^2+k+1)&\xi_1-k(k^2+k+1)\xi_2-\\
-k(k+1)&\xi_3+k(k+1)(k^2+k+1).\\
\end{align*}

The lattice point $z_k=(k-1,1,1)$ belongs to $\Delta_k$ and satisfies $\height_{F_k}(z_k)=1$.
Because $(1,-1,-1)=k\e_1-z_k$ and $k\e_1\in\vertex(\Delta_k)$, the parallel translates
$$
(j,-j,-j)+F_k\subset\RR^3,\quad j\in\NN,
$$
of the triangle $F_k$ live in the planes that are defined correspondingly by $\height_{F_k}(-)=-j$. Since these are unimodular triangles, we have
\begin{multline*}
\big((j,-j,-j)+\Aff(F_k)\big)\cap\ZZ^3=\\
(k+j,-j,-j)+\ZZ(-k,k+1,0)+\ZZ(-k,0,k^2+k+1).
\end{multline*}
Therefore, for every natural number $k$, the inequality (\ref{empty-positive-strata}) is equivalent to the system of equalities
\begin{equation}\label{outside}
\begin{aligned}
\big((k+j,-j,-j)+\ZZ(-k,k+1,0)+\ZZ(-k,0,k^2+k+1)\big)&\cap\RR^3_+=\emptyset,\\
&j=1,\ldots,k-1.\\
\end{aligned}
\end{equation}

For the mentioned range of $j$, if the components of the triple
$$
(k+j,-j,-j)+a(-k,k+1,0)+b(-k,0,k^2+k+1)
$$
are positive for some $a,b\in\ZZ$ then $-j<0$ implies $a,b>0$ and $k+j\le 2k-1$ implies $a+b<2$. This contradiction proves (\ref{outside}) and, hence, (b).

\MEDSCIP Because we know the height functions of the facets, (a) follows easily.

\MEDSCIP To prove (c), we put $z_k=(0,1,k^2+1)$. One can immediately check that $z_k$ has height $k$ and that only the facets in the orthants of $\RR^3$ with sign patterns $+++$ and $-++$ are visible from $z_k$. By symmetry it is enough to prove that $\conv(\Delta_k,z_k)$ is normal.

According to Theorem \ref{Dim3} below, it is enough to exhibit lattice points in $\Delta_k$ that have heights $1,\dots,k-1$ over $F$: $y_j=(k-j,j,j)$, $j=1,...,k-1$, is in $\Delta_k$ and has height $j$ over $F$.
\end{proof}

While there is no upper bound on the number of strata around $P$ that do not contain a lattice point, we have the following uniform bound depending only on $\width P$.

\begin{proposition}\label{height-width-bound}
For all natural numbers $d$ and every lattice polytope $P\subset\RR^d$ there is a lattice point $z\notin P$ such that $\height_P(z)\le \width P$. In particular, there is a point $z\notin P$ such that $\Lat(\conv(P,z))=\Lat(P)\cup \{z\}$ and $\height_P(z)\le\width P$.
\end{proposition}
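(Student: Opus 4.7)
The plan is to produce an explicit lattice point just outside $P$ whose height over $P$ is controlled by $\width P$, via a reflection trick, and then derive the ``In particular'' clause by passing to a lattice point minimizing $\height_P$ and invoking the first observation following Definition~\ref{height-over-polytope}.

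For the explicit point, I would fix any facet $F$ of $P$, choose a vertex $v$ of $P$ with $\height_F(v)=\width_FP$ (the facet-height function attains its maximum on $P$ at some vertex), and choose any lattice vertex $w$ of $F$ (which exists since $P$ is a lattice polytope). Setting
\[
z := 2w-v \in \ZZ^d,
\]
one computes $\height_F(z)=2\height_F(w)-\height_F(v)=-\width_FP$, which is strictly negative because $P\neq F$ forces $\width_FP\ge 1$; in particular $z\notin P$. To bound $\height_P(z)$, consider any facet $G$ of $P$ visible from $z$: since $w\in P$ we have $\height_G(w)\ge 0$, and since $v\in P$ we have $\height_G(v)\le\width_GP\le\width P$, so
\[
-\height_G(z)=\height_G(v)-2\height_G(w)\le\height_G(v)\le\width P.
\]
Taking the maximum over all visible $G$ yields $\height_P(z)\le\width P$, the first assertion.

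For the ``In particular'' clause, I would select $z^{\ast}\in\ZZ^d\setminus P$ that minimizes $\height_P$; such a minimum exists and, by the previous step, is at most $\width P$. Observation~(i) following Definition~\ref{height-over-polytope} then yields $\Lat(\conv(P,z^{\ast}))=\Lat(P)\cup\{z^{\ast}\}$. I anticipate no real obstacle; the only subtlety is checking $\width_FP\ge 1$, which follows because $P$ properly contains each of its facets and the facet-height function is $\ZZ$-valued on $\ZZ^d$.
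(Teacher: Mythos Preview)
Your proof is correct and follows essentially the same approach as the paper: both produce a lattice point $z=2a-b$ by reflecting one lattice point of $P$ through another, then use the affinity of the facet-height functions together with $\height_G(a)\ge 0$ and $\height_G(b)\le\width_G P$ to bound $-\height_G(z)$. The only cosmetic difference is the choice of $a$ and $b$: the paper takes the two endpoints $u,v$ of an edge and sets $z=2v-u$, arguing $z\notin P$ geometrically (it lies on the line through the edge, beyond $v$), whereas you take $w\in F$ and $v$ at maximal $F$-height and argue $z\notin P$ via $\height_F(z)<0$. Your handling of the ``In particular'' clause via observation~(i) is exactly what the paper intends.
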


\begin{proof}
Choose an edge of $P$ and consider its two endpoints $u,v$. Then $z=u+2(v-u)\notin P$ since it lies on the straight line through $u$ and $v$ and does not belong to the edge.

For any face $F$ of $P$ one has
$$
\height_F(z)=\height_F(2v-u)=2\height_F(v)-\height_F(u)\ge -\height_F(u)\ge -\width_F P.
$$
(Note that the coefficients in $2v-u$ sum to $1$.)
\end{proof}

\subsection{Quantum jumps}\label{Quantum-jumps}
\begin{definition}\label{jump-height}
\begin{enumerate}[{\rm (a)}]
\item
A \emph{minimal (resp. maximal) polytope} is a minimal (resp. maximal) element of $\np(d)$.
\item
A pair of $d$-polytopes $(P,Q)$ in $\np(d)$ with $P<Q$ and $\#\Lat(Q)=\#\Lat(P)+1$ will be called a \emph{quantum jump from $P$}, or simply a \emph{jump (of dimension $d$)}. If $z$ is the additional lattice point in $Q$, we will say that $z$ is a \emph{quantum jump} over $P$.
\item
The \emph{height} of a jump $(P,Q)$ is defined to be the height  over $P$ of the only lattice point in $Q\setminus P$; we denote this number by $\height(P,Q)$.
\end{enumerate}
\end{definition}

\begin{remark}\label{measures}
There are several natural measures one can associate to a jump $(P,Q)$, of which the height is one. Examples include the \emph{volume} $\v(P,Q)$, equal to $\vol_d(Q\setminus P)$, and the \emph{base} $\b(P,Q)$, equal to the sum of $(d-1)$-volumes of the facets $F\subset P$, visible from the vertex of $Q$ outside $P$, normalized correspondingly with respect to the lattices $\ZZ^d\cap\Aff(F)$. Both these measures are natural numbers. If $z=\vertex(Q)\setminus P$ then the equality
$$
\v(P,Q)=\b(P,Q)\height(P,Q)
$$
is equivalent to the condition that $z$ is on same height with respect to any facet $F\subset P$, visible from $z$.

The chains in $\np(d)$, consisting of jumps that maximize the volumes at each step, lead to normal polytopes in which the lattice points are relatively rarefied. One may think that such ascending chains have potential to lead to maximal elements in $\np(d)$: after all, the lattice points in a normal polytope are meant to be regularly distributed. The reality is not as simple though; see Section \ref{Maximal states}.
\end{remark}

\begin{example}\label{Dark vertices}\emph{(Dark vertices of polygons)} The order in $\np(2)$ coincides with the inclusion order on the lattice polygons in $\RR^d$ and, consequently, the order complex of $\np(2)$ is topologically trivial, i.e., contractible. In fact, all lattice polygons are normal (Theorem \ref{Encapsulates}(f)) and if $P\subsetneq Q$ in $\np(2)$ and $v$ is a vertex of $Q$, not in $P$, then $(Q_1,Q)$ is a jump, where $Q_1=\conv(\Lat(Q\setminus\{v\})$. Iterating the process, we find a finite descending sequence 
\begin{align*}
Q=Q_0\supset&Q_1\supset\ldots\supset Q_n=P,\\
&(Q_{i+1},Q_i)\ \text{a jump for every}\ i,\\
&\qquad\qquad\qquad n=\#\Lat(Q)-\#\Lat(P).
\end{align*}  

Although no polygon can be maximal, constructing jumps from a given polygon is not quite straightforward. Let us say that a vertex $v$ of $P$ is \emph{dark} if there is no jump $z$ over $P$ such that $v$ is visible (or `illuminated') from $z$. The origin is a dark vertex of the polygon with vertices
$$
(0,0),\ (0,1),\ (1,0),\ (5,1),\ (1,5).
$$
In fact, every jump $z$ from which $(0,0)$ is visible must have one coordinate equal to $-1$. But each of these points has height $>1$ over one of the other facets. See Figure~\ref{DarkVert}; the dashed lines are the lines of height $-1$ over the facets parallel to them.

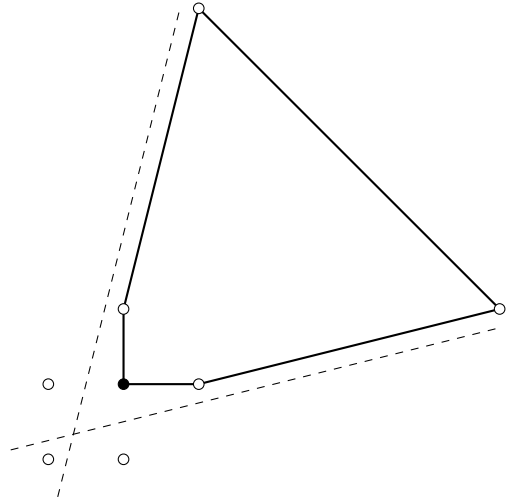
\begin{figure}[hbt]
\begin{center}
\tikzset{facet style/.style={opacity=1.0,very thick,line,join=round}}
\begin{tikzpicture}
\draw[thick] (0,1) -- (0,0) -- (1,0 ) --(5,1) -- (1,5) -- cycle ;
\filldraw[fill=black] (0,0) circle (2pt);
\filldraw[fill=white] (1,0) circle (2pt);
\filldraw[fill=white] (0,1) circle (2pt);
\filldraw[fill=white] (5,1) circle (2pt);
\filldraw[fill=white] (1,5) circle (2pt);
\filldraw[fill=white] (0,-1) circle (2pt);
\filldraw[fill=white] (-1,-1) circle (2pt);
\filldraw[fill=white] (-1,0) circle (2pt);
\draw[dashed] (-1.5,-0.875) -- (5,0.75);
\draw[dashed] (-0.875,-1.5) -- (0.75,5);
\end{tikzpicture}
\caption{A polygon with a dark vertex}
\label{DarkVert}
\end{center}
\end{figure}

If we add $(20,5)$ as a further vertex, then $(0,0)$ and $(1,0)$ will become dark. This construction can be continued an arbitrary number of steps: if $v_{-2},v_{-1},v_0,\dots,v_{n+2}$ have been constructed such that $v_0,\dots,v_{n}$ are dark, choose the next vertex $v_{n+3}$ at height $5$ over $[v_n,v_{n+1}]$  and height $1$ over $[v_{n+1},v_{n+2}]$. This will lead to a polygon with an arbitrary number of adjacent dark vertices at which the corner cones are unimodular. By the standard technique of toric desingularization \cite[Ch.~11]{CLS}, we can change the polygon to a smooth one keeping the adjacent dark vertices untouched and still dark.

If the construction is continued infinitely many times, it yields an unbounded polygon $P$  with all dark vertices and unimodular corner cones. Equivalently, all lattice points outside $P$ have infinite height over it.
\end{example}

\begin{example}\label{ex:3-order}\emph{(The poset $\np(3)$)}
As shown above, the order in $\np(2)$ is simply the inclusion order. Although it is open whether extreme elements apart from unimodular simplices exist in $\np(3)$, the example below, found by computer search, shows that the inclusion order is finer than the one induced by jumps.

Consider the 3-polytope $P$ with vertices:
$$(0,0,2),(0,0,1),(0,1,3),(1,0,0),(2,1,2),(1,2,1).$$
It is a normal lattice polytope with two additional lattice points:
$(1,1,2),(1,1,1).$

Removing either the first \emph{or} the second vertex and taking the convex hull of the other lattice points in $P$ yields a nonnormal polytope. However, if $Q$ is the convex hull of all lattice points in $P$ apart from the first \emph{and} the second vertex, then $Q$ is a normal polytope. Clearly $Q$ is inside $P$, but $Q\not<P$. 
More examples similar to the one above can be found.  
\end{example}

\begin{example}\label{Unimodulr simplices}\emph{(Unimodular simplices)}
Any two unimodular $d$-simplices in $\RR^d$ belong to same connected component of $\np(d)$. In fact, let $\Delta_1$ and $\Delta_2\subset\RR^d$ be two unimodular simplices and $v\in\Delta_1$ and $w\in\Delta_2$ be vertices. Choose a lattice broken line $[v_1,v_2,\ldots,v_k]$ in $\RR^d$, where $v=v_1$, $w=v_k$, and $v_{j+1}-v_j\in\ZZ^d$ is a primitive vector for every $j=1,\ldots,k-1$. Then we have $v<\Delta_1$, $w<\Delta_2$, and $v_j,v_{j+1}<[v_j,v_{j+1}]$.

If, in addition, $\dim\Delta_1=\dim\Delta_2=d$, then the two simplices can be even connected by quantum jumps. To this end, we first reduce the general case to the case when $\Delta_1$ and $\Delta_2$ share a vertex. Let $[v_1,\ldots,v_k]$ be a broken line as above. There exist unimodular $d$-simlices $T_1,\ldots,T_{k-1}$ such that $v_j,v_{j+1}\in\vertex(T_j)$ for every $j=1,\ldots,k-1$. In particular, it is enough to connect by quantum jumps the simplices in each of the doublets
$$
\{\Delta_1,T_1\},\ \{T_1,T_2\},\ \ldots,\ \{T_{k-1},\Delta_2\}.
$$
But the simplices in each of these pairs share a vertex. At this point without loss of generality we can assume that $0$ is a vertex of $\Delta_1$ and $\Delta_2$. Let $x_1,\ldots,x_d\in\Delta_1$ and $y_1,\ldots,y_d\in\Delta_2$ be the other vertices. Consider the two matrices in $GL_d(\ZZ)$:
$A=[x_1\ldots x_d]$ and $B=[y_1\ldots y_d]$.
By an appropriate enumeration of the nonzero vertices, we can further assume $\det A=\det B=1$. Then, because $\ZZ$ is a Euclidean domain, every integer matrix with determinant 1 is a product of elementary matrices: $SL_d(\ZZ)=E_d(\ZZ)$. Equivalently, we can transform $\{x_1,\ldots,x_d\}$ into $\{y_1,\ldots,y_d\}$ by a series of successive elementary transformations of the following two types
\begin{align*}
&\{z_1,\ldots,z_p,\ldots,z_q,\ldots,z_d\} \longrightarrow\{z_1\,\ldots,z_p,\ldots,z_q+z_p,\ldots,z_d\},\\
&\{z_1,\ldots,z_p,\ldots,z_q,\ldots,z_d\} \longrightarrow\{z_1\,\ldots,z_p,\ldots,z_q-z_p,\ldots,z_d\}.\\
\end{align*}
In particular, it is enough to show that, for a basis $\{z_1\ldots,z_d\}\subset\ZZ^d$  and two natural numbers $1\le p\not= q\le d$, the unimodular simplices in each of the pairs
\begin{align*}
\{\conv(0,z_1,\ldots,z_d),\ \conv(0,z_1\,\ldots,z_p,\ldots,z_q+z_p,\ldots,z_d)\},\\
\{\conv(0,z_1,\ldots,z_d),\ \conv(0,z_1\,\ldots,z_p,\ldots,z_q-z_p,\ldots,z_d)\}
\end{align*}
can be connected by quantum jumps. For simplicity of notation we can assume $p=1$ and $q=2$. Now the desired jumps are provided by:
\begin{align*}
\conv(0,z_1,z_2,\ldots,z_d)<\conv(0,z_1,z_1+z_2,z_2,\ldots,z_d)>\conv(0,z_1,z_1+z_2,\ldots,z_d),\\
\conv(0,z_1,z_2,\ldots,z_d)<\conv(0,z_1,z_1-z_2,z_2,\ldots,z_d)>\conv(0,z_1,z_1-z_2,\ldots,z_d),
\end{align*}
where the middle polytopes are normal, each being the union of two unimodular simplices:
\begin{align*}
\conv(0,z_1,z_1+z_2,z_2,\ldots,z_d)=\conv(0,z_1,z_2,\ldots,z_d)\cup\conv(z_1,z_1+z_2,z_2\ldots,z_d),\\
\conv(0,z_1,z_1-z_2,z_2,\ldots,z_d)=\conv(0,z_1,z_2,\ldots,z_d)\cup\conv(0,z_2-z_1,z_2,\ldots,z_d).
\end{align*}
\end{example}

Below, in Theorems \ref{Dim3}, \ref{ParaCrit}, and \ref{norm-criterion}, we will give useful criteria for a pair of lattice polytopes to be a quantum jump. In dimension $2$ the situation is very simple.

\begin{proposition}\label{non-maximal}
Let $P$ be a normal polytope.
\begin{enumerate}[{\rm (a)}]
\item If $z$ is a height $1$ lattice point over $P$, it is a quantum jump. In particular, the first lattice stratum around any maximal polytope is empty.
\item If $\dim P\le2$ then every quantum jump over $P$ has height $1$.
\end{enumerate}
\end{proposition}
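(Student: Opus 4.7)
My plan for (a) is to exhibit $Q = \conv(P \cup \{z\})$ as $P$ glued along its visible facets to unimodular pyramids with apex $z$. Concretely, since $\height_P(z)=1$, every facet $F_i$ of $P$ visible from $z$ must satisfy $\height_{F_i}(z)=-1$, so $\conv(F_i,z)$ is a lattice pyramid of lattice height one over its base, i.e. a unimodular pyramid in the sense of Section \ref{Polytopes}. Each $F_i$ is a facet of the normal polytope $P$, hence normal by Theorem \ref{Encapsulates}(c); Theorem \ref{Encapsulates}(d) then yields normality of each pyramid $\conv(F_i,z)$, and Theorem \ref{Encapsulates}(b) gives normality of $Q=P\cup\bigcup_i\conv(F_i,z)$.

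To finish (a) I still need $\Lat(Q)=\Lat(P)\cup\{z\}$. Write any $w\in Q$ as $w=(1-\lambda)p+\lambda z$ with $p\in P$ and $\lambda\in[0,1]$. For each visible facet $F_i$,
$$
\height_{F_i}(w)=(1-\lambda)\height_{F_i}(p)+\lambda(-1)>-1\qquad\text{whenever }\lambda<1,
$$
so integrality of $\height_{F_i}(w)$ forces $\height_{F_i}(w)\ge 0$ for any lattice $w\ne z$. Since $z$ (and $P$) also lie on the non-negative side of every non-visible facet of $P$, such a $w$ is on the $P$-side of every facet of $P$, and therefore $w\in P$. The ``in particular'' about maximal polytopes is the immediate contrapositive of the main assertion.

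For (b), the case $\dim P=1$ is a direct enumeration on a lattice segment, so I assume $\dim P=2$. Let $F$ be a facet visible from $z$ with $\height_F(z)=-h$, where $h:=\height_P(z)$, and apply an integral-affine change of coordinates placing $F=[(0,0),(k,0)]$ (with $k\ge 1$ the lattice length of $F$) and $P\subseteq\{y\ge 0\}$, so $z=(a,-h)$ for some $a\in\ZZ$. Suppose $h\ge 2$. Slicing the triangle $\conv((0,0),(k,0),(a,-h))$ at height $y=-1$ gives abscissae in $[a/h,(a+k(h-1))/h]$, and a lattice point $(j,-1)$ lies there iff $a\le jh\le a+k(h-1)$. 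Because $k(h-1)+1\ge h$, the $k(h-1)+1$ consecutive integers $a,a+1,\ldots,a+k(h-1)$ must contain a multiple of $h$. The resulting lattice point $(j,-1)\in\conv(F,z)\subseteq Q$ lies strictly outside $P$ and is distinct from $z$, contradicting the jump hypothesis.

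The substantive input of the argument is the pigeonhole step at the end of (b); the delicate part of (a) is ensuring that no extra lattice points creep into $Q$, which is handled by the strict inequality $\height_{F_i}(w)>-1$ combined with integrality. I expect no other real obstacles.
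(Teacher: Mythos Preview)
Your proof is correct. Part (a) follows exactly the paper's argument: decompose $Q$ as $P$ together with unimodular pyramids $\conv(F_i,z)$ over the visible facets, invoke Theorem~\ref{Encapsulates}(c),(d),(b), and note $\Lat(Q)=\Lat(P)\cup\{z\}$; you simply supply more detail on this last point than the paper's terse ``clearly.''

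For (b) your route differs slightly from the paper's. The paper picks a \emph{unimodular} subsegment $\Delta\subset F$ and observes that $\conv(\Delta,z)$ is an empty lattice triangle (its only lattice points in $Q\setminus P$ are $z$), hence unimodular, hence $\height_F(z)=-1$; this is a one-line appeal to the classification of empty $2$-simplices. You instead keep the full edge $F$ of lattice length $k$, pass to explicit coordinates, and run a pigeonhole argument on the slice $y=-1$ to produce an extra lattice point when $h\ge 2$. Both arguments are short and valid; the paper's version avoids the coordinate computation by leaning on the well-known fact about empty triangles, while yours is entirely self-contained.
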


\begin{proof}
(a) Clearly there are no additional lattice points in $Q=\conv(P,z)$. Let $F$ be a facet of $P$ that is visible from $z$. Then $F$ is normal by Theorem \ref{Encapsulates}(c), and $\conv(F,z)$, being a unimodular pyramid over $F$, is normal by Theorem \ref{Encapsulates}(d). Thus $Q$ is normal by Theorem \ref{Encapsulates}(b).

\MEDSCIP\NOINDENT(b) This is obvious in dimension $1$. In dimension $2$, let $F$ be a facet of $P$ that is visible from $z$, and let $\Delta$ be a unimodular line segment in $F$. Then $\conv(\Delta,x)$ is an empty triangle and therefore unimodular. But this implies $\height_F(z)=-1$.

\end{proof}

\subsection{Quantum jumps in dimension 3}\label{In-dimension-3}
In dimension $3$ we have a rather detailed description of quantum jumps $(P,\conv(P,z))$, which uses the subdivision of $P$ according to the rays emerging from $z$ and passing through a facet $F$ visible from $z$: we set
$$
P_{z,F}=\{x\in P: [x,z]\cap F\neq\emptyset\};
$$
see Figure \ref{PzF}.

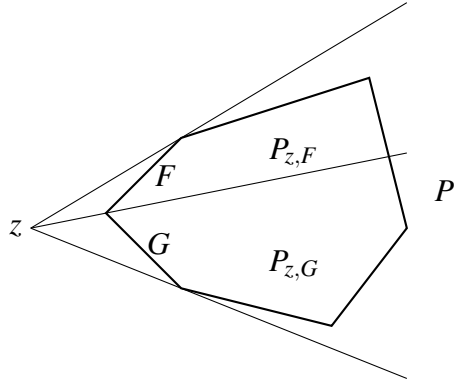
\begin{figure}[hbt]
\begin{center}
\tikzset{facet style/.style={opacity=1.0,very thick,line,join=round}}
\begin{tikzpicture}
\draw[thin] (0,0) -- (5,3) node at (-0.2,0){$z$};
\draw[thin] (0,0) -- (5,-2);
\draw[thin] (0,0) -- (5,1);
\draw[thick] (2,1.2) -- (1,0.2) -- (2,-0.8) --( 4,-1.3) -- (5,0) -- (4.5, 2) -- cycle node at (5.5,0.5){$P$} node at (3.5,1){$P_{z,F}$} node at (3.5,-0.5){$P_{z,G}$} node at (1.8,0.7){$F$} node at (1.7,-0.2){$G$};
\end{tikzpicture}
\caption{The subdivision of $P$ by facets visible from $z$}
\label{PzF}
\end{center}
\end{figure}

\begin{theorem}\label{Dim3}
Let $P\subset Q$ be lattice $3$-polytopes such that $P$ is normal and $\#\Lat(Q)=\#\Lat(P)+1$. Let $z$ be the additional lattice point in $Q$. Then the following are equivalent:
\begin{enumerate}[{\rm(a)}]
\item $z$ is a quantum jump over $P$.
\item For each facet $F$ of $P$ that is visible from $z$, the polytope $P_{z,F}$ contains at least (equivalently: exactly) $\mu(F)$ lattice points $y$ such that $\height_F(y)=j$, $j=1,\dots,\height_F(z)-1$.
\end{enumerate}
\end{theorem}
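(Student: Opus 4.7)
My plan is to decompose $Q$ into simplicial pieces indexed by a unimodular triangulation of each visible facet, and then exploit the $\Lpara$ structure of each simplicial cone, using the emptiness forced by the jump hypothesis. Specifically, I would write $Q = P \cup \bigcup_F \Pi_F$ over the facets $F$ of $P$ visible from $z$, where $\Pi_F = \conv(F,z)$, and for each such $F$ fix a unimodular triangulation $T_1,\ldots,T_k$ of $F$ (so $k = \mu(F) = \vol_2(F)$), obtaining $\Pi_F = \bigcup_i \Pi_{T_i}$ with $\Pi_{T_i} = \conv(T_i,z)$. Put $h := \height_F(z)$. The hypothesis $\#\Lat(Q) = \#\Lat(P) + 1$ forces each $\Pi_{T_i}$ to be an empty $3$-simplex, so Lemma~\ref{empty} combined with Proposition~\ref{ht-points} gives that $\Lpara(C(\Pi_{T_i}))$ has exactly one nonzero element $r_{i,j}$ at each cone-height $j = 1, \ldots, h-1$, and each such element sits at degree $2$.

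For (b) $\Rightarrow$ (a): since $P$ is normal, Theorem~\ref{Encapsulates}(b) reduces the problem to writing each $r_{i,j}$ as a sum of degree-$1$ elements of $C(Q)$. A direct barycentric calculation produces the candidate
$$r_{i,j} = (y_{i,j},1) + (z,1),$$
where, writing $r_{i,j} = (\rho_{i,j}, 2)$ with $\rho_{i,j} \in \ZZ^3$, the point $y_{i,j} := \rho_{i,j} - z$ is a lattice point at $\RR^3$-height $h - j$ above $F$ lying on the ray from $z$ through a point of $T_i$; equivalently $y_{i,j}$ lies in the cross-section $T_i^{(h-j)}$ of the cone from $z$ through $T_i$ at that height. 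The key claim I would prove is that each such $T_i^{(h-j)}$ contains at most one point of $\ZZ^3$: two distinct lattice points $y \ne y'$ there would produce two distinct degree-$2$ elements $(y+z,2) \ne (y'+z,2)$ of $C(\Pi_{T_i})$ in the same residue class modulo $U = \sum_k \ZZ v_k$ (one checks $y - y'$ lies in the $\ZZ$-span of $\{w_k - w_{k'}\}$, which is the full $2$-dimensional lattice in the plane of $F$), contradicting the uniqueness of the $\Lpara$ representative. Granted this, condition (b) supplies at least $k$ lattice points in $P_{z,F}$ at height $h - j$, which together with the at-most-one-per-triangle bound forces every $y_{i,j}$ to lie in $\Lat(P) \subset \Lat(Q)$, so each $r_{i,j}$ decomposes.

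For (a) $\Rightarrow$ (b) I would argue the contrapositive. If (b) fails at some $F$ and some $j \in \{1, \ldots, h-1\}$, the counting of the previous paragraph forces some $T_i$ to have its candidate $y_{T_i, h - j}$ outside $\Lat(P)$, and hence outside $\Lat(Q)$, since it has positive $\RR^3$-height and is not $z$. The corresponding element $r_{T_i, h-j} \in \Lat(C(Q))$ then admits no decomposition into two degree-$1$ elements of $C(Q)$: the $\RR^3$-heights occurring in $\Lat(Q) = \Lat(P) \cup \{z\}$ lie in $\ZZ_{\ge 0} \cup \{-h\}$, so the only way two of them can sum to the $\RR^3$-height $-(h - j)$ of $\rho_{T_i, h-j}$ is one copy of $z$ plus one point at height $h - j$, and that point is forced to be precisely the missing $y_{T_i,h-j}$. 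Hence $Q$ is not normal.

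The principal difficulty will be the at-most-one-lattice-point claim for the cross-sections $T_i^{(h-j)}$: this is what converts the bare numerical bound of (b) into the per-triangle information needed for the decompositions, and it rests crucially on the emptiness of each $\Pi_{T_i}$, which is itself a consequence of the jump hypothesis $\#\Lat(Q) = \#\Lat(P) + 1$ via Lemma~\ref{empty}.
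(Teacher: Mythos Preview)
Your approach is essentially the same as the paper's: both fix a unimodular triangulation of each visible facet $F$, reduce via Lemma~\ref{HilbDeg} to the degree-$2$ elements of the cones $C(\conv(T_i,z))$, invoke emptiness (Lemma~\ref{empty}) plus Proposition~\ref{ht-points} to pin down the $\Lpara$ elements, and match them bijectively with the height-$(h-j)$ lattice points of $P_{z,F}$ via $w\mapsto w'+z'$. Two small points to tighten: your citation of Theorem~\ref{Encapsulates}(b) for the reduction is misplaced (the pieces $\Pi_{T_i}$ are not themselves normal --- the reduction is really Lemma~\ref{HilbDeg}), and in the (a)$\Rightarrow$(b) direction you implicitly use that the $y_{i,j}$ are pairwise distinct across $i$, which follows because each $r_{i,j}$ lies in the \emph{interior} of its cone (the proper faces of $\Pi_{T_i}$ are empty triangles, hence unimodular), a step worth making explicit.
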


\begin{proof}
Let $F$ be a facet of $P$ that is visible from $z$. Since $F$ has dimension $2$ it has a unimodular triangulation $\Sigma$. Let $\Delta$ be a triangle in $\Sigma$. For (b) $\implies$ (a) it is enough to show that all degree $2$ lattice points in $C(Q)\setminus C(P)$ are reducible (Lemma \ref{HilbDeg}).

Let $y$ be such a point. Since the tetrahedra $\conv(\Delta,z)$ form a triangulation of $\conv(F,z)$ there are two possibilities for $y$:
\begin{enumerate}[{\rm(i)}]
\item $y$ is in the boundary of one (or more) cones $C(\Delta,z)$;
\item $y$ is in the interior of exactly one such cone.
\end{enumerate}
In case (i) $y$ is reducible since the facets of $\conv(\Delta,z)$ are unimodular: they are empty triangles. Thus the Hilbert basis of such a facet lives in degree $1$, and the degree $1$ lattice points different from $z'$ are of the form $x'$ with $x\in P$.

In case (ii) we have $y\in\Lpara(\Delta,z)$. Observe that there is exactly one point in $\Lpara(\Delta,z)$ that has height $j$, $j=1,\dots,m-1$, over the facet $\Delta\subset\conv(\Delta,z)$, where $m=\height_F(z)$ (Proposition \ref{ht-points}). We want to show that $y=u'+z'$ for a lattice point $u$ of height $m-j$ in $P_{z,F}$.

It is enough to show that
\begin{equation}\label{dim-3-jump}
\bigcup_{\Sigma}\Lpara(\Delta,z)=\{w'+z'\ :\ w\ \text{a lattice point of height}\ m-j\ \text{in}\ P_{z,F}\}.
\end{equation}
Equality (\ref{dim-3-jump}) follows if we show that every point $w'+z'$, where $w'$ is as in (\ref{dim-3-jump}), is in the interior of $\para(\Delta,z)$ for one of the triangles $\Delta$.

Clearly $w'+z'$ is outside $C(P)$ since it has negative height over $F$ (considered as a facet of $P$). But it is in $C(Q)$. So one of the alternatives (i) or (ii) applies. Since the lattice points satisfying (i) are sums $v'+z'$ with $v$ a vertex of $\Delta$ for some $\Delta$ and $w\notin F$, the alternative (i) is excluded. So (ii) applies, and we get indeed the desired $\mu(F)$ points --- one in each $\Lpara(\Delta,z)$.

Similarly, for (a)$\implies$(b), for each $\Delta$ we consider the point of height $j$  and degree $2$ in $\Lpara(\Delta,z)$. As $Q$ is normal, each such point must be a sum of two homogenized points in $\Lat(Q)$, one of which has to be equal to $z'$. All the other points must be different, belong to $P_{z,F}$, and have height $m-j$ over $F$.
\end{proof}

\begin{remark}\label{emptyjump}
Theorem \ref{Dim3} can of course be used to analyze the jumps over specific polytopes. For example, let $P=2\Delta_{pq}$ where $\Delta_{pq}$ is the empty $3$-simplex spanned by $0, \e_1,\e_3,q\e_1+p\e_2+\e_3$, $1\le q\le p-1$, $p,q$ coprime. Then $P$ has facets of multiplicity $4$, but for each facet $F$ only a single lattice point of height $1$ over $F$. Thus a quantum jump over $P$ must have height $1$ (and such exists). It is an old result of White \cite{White} that all empty 3-simplices are unimodularly equivalent to the $\Delta_{pq}$.
\end{remark}

\begin{remark}\label{Jump3}
All normal $3$-polytopes $P$ that have been encountered in our experiments, millions of them have the following remarkable property:  every point in the lowest non\-empty stratum over $P$ is a jump. On the other hand, the jumps in $\np(3)$ need not be confined to the lowest nonempty stratum.

This changes completely in dimension $4$. There exist $4$-polytopes over which there is no jump at all (see Section \ref{Maximal states}), but there are examples where jumps exist and none of them belongs to the lowest nonempty stratum.
\end{remark}

Despite all the information on $\np(3)$ at hand, we do not know whether there are maximal normal 3-polytopes (or nontrivial minimal elements). In one special case we can provide the answer.

\begin{proposition}
There are no simplices that are maximal elements of $\np(3)$.
\end{proposition}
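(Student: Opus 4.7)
\emph{Plan.} Given a normal $3$-simplex $\Delta = \conv(v_0, v_1, v_2, v_3)$ with $F_i$ the facet opposite $v_i$ and $m_i = \height_{F_i}(v_i)$, the goal is to exhibit a lattice point $z \notin \Delta$ with $\height_\Delta(z) = 1$. By Proposition~\ref{non-maximal}(a) such a $z$ is automatically a quantum jump from $\Delta$, so $\Delta$ is not maximal.

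The main construction is the \emph{edge extension}: for an ordered pair $(v_i, v_j)$ with $i \ne j$, let $e_{ij} = (v_j - v_i)/k_{ij}$ be the primitive lattice vector along $[v_i, v_j]$, where $k_{ij} \ge 1$ is the lattice length of the edge, and set $z_{ij} := v_i - e_{ij}$. Linearity of the facet height functions together with $h_{F_l}(v_l)=m_l$ and $h_{F_l}(v_p)=0$ for $p\ne l$ gives $h_{F_j}(z_{ij}) = -m_j/k_{ij}$, $h_{F_i}(z_{ij}) = m_i(1 + 1/k_{ij}) > 0$, and $h_F(z_{ij}) = 0$ for the two facets $F$ containing $[v_i, v_j]$. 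Hence only $F_j$ is strictly visible from $z_{ij}$, and $\height_\Delta(z_{ij}) = m_j/k_{ij}$, a positive integer dividing $m_j$. In particular, $z_{ij}$ is a height-$1$ jump exactly when $k_{ij} = m_j$.

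The heart of the proof is then the combinatorial claim: for every normal $3$-simplex, some ordered pair $(v_i, v_j)$ satisfies $k_{ij} = m_j$, i.e., some primitive edge vector $e_{ij}$ realises $h_{F_j}(e_{ij})=1$. I would establish this using Theorem~\ref{Encapsulates}(e): applied to a flag ending in $F_j$, it yields a unimodular sub-simplex $U\subset\Delta$ with a vertex $w\in\Lat(\Delta)$ at height $1$ over $F_j$, and the edge vectors of $U$ at $w$ form a $\ZZ$-basis of $\ZZ^3$. The normality of $\Delta$, which identifies $\Lat(\Delta)'$ with the Hilbert basis of $C(\Delta)$, should force $w$ to participate in a $\ZZ_+$-decomposition along the vertices $v_l$ in a way that exhibits a primitive edge direction of $\Delta$ as a height-$1$ step over $F_j$.

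The principal obstacle is precisely this last deduction: converting the abstract height-$1$ lattice point $w$ into an honest primitive edge of $\Delta$. As a backup, one may search the rational triangle $F_j^{\ast}:= v_j + ((m_j+1)/m_j)(F_j - v_j)$ lying at height $-1$ over $F_j$ --- any lattice point in its relative interior has only $F_j$ strictly visible among the facets of $\Delta$ and thus provides a height-$1$ jump by the same argument. Showing that for every normal $3$-simplex either the edge construction succeeds or one of the four triangles $F_j^{\ast}$ contains a suitable lattice point (via a Pick-type count that leverages the normality constraints on the $m_i$ and $\mu(F_i)$) will complete the proof.
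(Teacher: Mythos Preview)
Your edge-extension construction is correct as far as it goes, but the central combinatorial claim---that some ordered pair $(i,j)$ satisfies $k_{ij}=m_j$---is \emph{false}. Take $\Delta=2\Delta_{pq}$ for coprime $1\le q\le p-1$ with $p\ge 2$ (see Remark~\ref{emptyjump}); this simplex is normal by Theorem~\ref{Encapsulates}(f). Since $\Delta_{pq}$ is an empty simplex, every edge of $2\Delta_{pq}$ has lattice length $k_{ij}=2$, while a direct computation of the four facet-height functions gives $m_j=2p$ for every $j$. Thus $m_j/k_{ij}=p\ge 2$ for \emph{all} pairs, and none of your twelve edge extensions is a height-$1$ point. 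Your appeal to Theorem~\ref{Encapsulates}(e) produces a height-$1$ lattice point $w$ \emph{inside} $\Delta$, but you give no mechanism for converting it into a height-$1$ point \emph{outside} $\Delta$ from which only $F_j$ is visible; the phrase ``should force'' is exactly where the argument stops. The backup with the rational triangles $F_j^{\ast}$ is likewise only a hope: you would need to show that for every normal $3$-simplex at least one $F_j^{\ast}$ has an interior lattice point, and no such Pick-type count is actually carried out.

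The paper sidesteps the question of height-$1$ jumps entirely. Fixing one facet $F$, it lets $z$ be a lattice point of \emph{minimal} height $k$ among all lattice points outside $S$ from which only $F$ is visible (your edge extensions $z_{i0}$ show this set is nonempty). Then $S_{z,F}=S$, and by Theorem~\ref{Dim3} it suffices to exhibit $\mu(F)$ lattice points of $S$ at height $j$ over $F$ for each $j=1,\dots,k-1$. The minimality of $k$ is used twice: a ray argument from $v_0$ shows $S$ has no lattice points at heights $\height_F(v_0)-1,\dots,\height_F(v_0)-k+1$, and the inversion $m\mapsto v_0'+v_1'+v_2'+v_3'-m$ on $\Lpara(S)$ then rules out degree-$3$ elements of $\Lpara(S)$ at heights $1,\dots,k-1$ over $F$; a second application of minimality rules out degree-$2$ elements there as well. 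Hence all height-$j$ points of $\Lpara(S)$ for $j<k$ live in degree $1$, where Proposition~\ref{ht-points} supplies exactly $\mu(F)$ of them. The idea you are missing is to let the height of the jump float and exploit its minimality against the degree stratification of $\Lpara(S)$, rather than trying to force height $1$.
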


\begin{proof}
Let $S\in\np(3)$ be a simplex with vertices $v_0,v_1,v_2,v_3$ and $F=\conv(v_1,v_2,v_3)$. Among the lattice points not in $S$, from which the only visible facet is $F$, let $z$ minimize $|\height_F(z)|$. Say $\height_F(z)=-k$. We claim that $z$ is a quantum jump. We have $\Lat(\conv(S,z))=\Lat(S)\cup\{z\}$. Because $F$ is the only visible facet, we have $S_{z,F}=S$, using notation as in Theorem \ref{Dim3}. Hence, by the mentioned theorem, we only have to check that $S$ contains $\mu(F)$ lattice points of height $j$ over $F$ for $j=1,\dots,k-1$.

Note that there are no lattice points in $S$ with height $\height_F(v_0)-1,\dots,\height_F(v_0)-k+1$. Indeed, if such a point existed, we could consider the ray starting at $v_0$ passing through that point. The first point on that ray outside $S$ would contradict the choice of $z$. In particular $\height_F(v_0)\geq k$.
Moreover, in view of the inversion map
$$
\Lpara(S)\to\ZZ^4,\quad m\rightarrow v_0'+v_1'+v_2'+v_3'-m,
$$
this implies that there are no points of degree three and of height $j$ over $F$ for $j=1,\dots,k-1$ in $\Lpara(S)$. Hence, all height $j$ points must appear in degree two and one. However, if such a point $q$ of degree two existed, then the only facet visible from the point
$$
w=v_1'+v_2'+v_3'-q=(v_1'+v_2'+v_3'+v_0'-q)-v_0'
$$
would be  $F$, and $\height_F(w)=-j$, which would contradict the choice of $z$. We thus conclude that the height $j$ points must appear in degree one. But there are exactly $\mu(F)$ such points by Proposition \ref{ht-points}.
\end{proof}
As it turns out, already in dimension 4 there are maximal normal simplices, see~Section \ref{Maximal states}.
\section{Bounding quantum jumps in $\np(d)$}\label{Bounding-jumps}

In this section we derive a bound for the heights of quantum jumps in all dimensions and show that this bound is sharp.

We begin with a criterion for a quantum jump.

\begin{theorem}\label{ParaCrit}
Let $P\subset Q$ be lattice $d$-polytopes such that $P$ is normal. Suppose that  $\Lat(Q)=\Lat(P)\cup\{z\}$. 
For every facet $F$ of $P$ that is visible from $z$, let $\Sigma_F$ be a triangulation of $F$. Then the following are equivalent:
\begin{enumerate}[{\rm(a)}]
\item $Q$ is normal.
\item For each facet $F$ of $P$ that is visible from $z$ and every $(d-1)$-simplex $\Delta\in\Sigma_F$ one has
$$
y-z'\in C(P)
$$
for every $y\in \Lpara(\Delta,z)$ with $\height_F(y)<0$.
\end{enumerate}
\end{theorem}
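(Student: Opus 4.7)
The plan is to prove the two directions separately, in each case exploiting the decomposition $C(Q) = C(P) \cup \bigcup_{F \text{ visible from } z} C(\conv(F, z))$ together with the simplicial structure of each $C(\conv(\Delta, z))$: by Proposition~\ref{SimpMod}(a), every lattice point of such a cone writes uniquely as $y + \sum_i n_i v_i' + n_z z'$ with $y \in \Lpara(\Delta, z)$, $n_i, n_z \in \ZZ_+$, and $v_1,\ldots,v_{d-1}$ the vertices of $\Delta$.

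For $(\mathrm{b}) \Rightarrow (\mathrm{a})$, I would take an arbitrary lattice point $w \in C(Q)$. If $w \in C(P)$, normality of $P$ handles it. Otherwise $w$ lies in $C(\conv(\Delta, z))$ for some visible facet $F$ and some $\Delta \in \Sigma_F$; expand $w$ as above. If the parallelotope part $y$ vanishes, $w$ is already a nonnegative integer combination of elements of $\Lat(Q)'$. If $y \neq 0$ but $\height_F(y) \geq 0$, then since $\height_F$ vanishes on each $v_i'$ and is strictly negative on $z'$, the $z'$-coefficient of $y$ must equal $0$, whence $y \in C(\Delta) \subset C(P)$ and normality of $P$ decomposes it. If $\height_F(y) < 0$, hypothesis (b) gives $y - z' \in C(P)$, so $y = z' + (y - z')$ splits via normality of $P$. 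In every case $w$ becomes a sum of elements of $\Lat(Q)'$, so $Q$ is normal.

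For $(\mathrm{a}) \Rightarrow (\mathrm{b})$, given $y \in \Lpara(\Delta, z)$ with $\height_F(y) < 0$, I would verify $\height_G(y - z') \geq 0$ for every facet $G$ of $P$, which is equivalent to $y - z' \in C(P)$. Writing $y = \sum \lambda_i v_i' + \lambda_z z'$ with $\lambda_i, \lambda_z \in [0, 1)$ and setting $m = -\height_F(z')$, the facets visible from $z$ are handled by direct computation: for $G = F$ one obtains $\height_F(y - z') = (1 - \lambda_z) m > 0$, and for any other visible $G$ (with $m_G = -\height_G(z') > 0$) one gets $\height_G(y - z') = \sum \lambda_i \height_G(v_i') + (1 - \lambda_z) m_G \geq 0$ using $v_i \in F \subset P$. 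The main obstacle is a non-visible facet $G$ with $\height_G(z') > 0$ strictly: the geometry of $y \in C(\conv(\Delta, z))$ alone yields only $\height_G(y) \geq \lambda_z \height_G(z')$, which is weaker than what is required. Here $(\mathrm{a})$ enters decisively: by normality of $Q$, write $y = \sum_{j=1}^{d_y} u_j'$ with $u_j \in \Lat(Q) = \Lat(P) \cup \{z\}$, and observe that at least one $u_j$ must equal $z$, for otherwise $y \in C(P)$ would contradict $\height_F(y) < 0$. Since $\height_G(u_j') \geq 0$ for every $u_j \in \Lat(P)$, the presence of at least one $z$ forces $\height_G(y) \geq \height_G(z')$, giving $\height_G(y - z') \geq 0$ and completing the verification.
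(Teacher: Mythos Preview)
Your proof is correct and follows the same overall strategy as the paper: for (b)$\Rightarrow$(a) both arguments use the covering $C(Q)=C(P)\cup\bigcup_{F,\Delta}C(\conv(\Delta,z))$ together with Proposition~\ref{SimpMod}, and for (a)$\Rightarrow$(b) both extract a copy of $z'$ from a Hilbert-basis decomposition of $y$ and then argue that $y-z'\in C(P)$. The difference lies only in this last step: the paper disposes of it in one sentence via a ray argument, whereas you verify the inequality $\height_G(y-z')\ge 0$ facet by facet, invoking normality of $Q$ precisely for the non-visible facets where the parallelotope coordinates alone are insufficient. Your version is more explicit and arguably easier to check. One small slip: a $(d-1)$-simplex $\Delta$ has $d$ vertices, so it should read $v_1,\dots,v_d$ rather than $v_1,\dots,v_{d-1}$; this does not affect the argument.
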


\begin{proof}
Suppose that $Q$ is normal and let $y$ be one of the points as in (b). Then $y\in C(Q)\setminus C(P)$. On the other hand, the Hilbert basis of $C(Q)$ is given by the vectors $x'$, $x\in\Lat(P)$, and $z'$. So $z'$ must appear in a representation of $y$ as a sum of Hilbert basis elements. The ray from $z'$ towards $y$ leaves the simplicial cone $C(\Delta,z)$ through the facet $C(\Delta)$ and thus passes through $C(F)$. Since $y-z'\in C(Q)$ lies on this ray and has positive height over $F$, it must be in $C(P)$.

For the converse we observe that the simplices $\conv(\Delta,z)$, $\Delta\in\Sigma_F$, are a triangulation of $\conv(F,z)$. Then the union of $\Hilb(C(P))$, $z'$ and the $\Lpara(\Delta,z)\setminus C(P)$, where $\Delta$ ranges over the $(d-1)$-simplices in $\Sigma_F$, contains a generating set of the monoid $\Lat(C(Q))$. But (b) implies that all the lattice points in $\Lpara(\Delta,z)\setminus C(P)$ are reducible.
\end{proof}

The bound on all quantum jumps over a polytope $P\in\np(d)$ in Theorem \ref{finitestrata} below can be also derived from Theorem \ref{ParaCrit}. However, we present an independent proof which uses a weaker condition than normality, related to the s.c. very ampleness of lattice polytopes.

\begin{definition}\label{VeryAmple}
A lattice polytope $R\subset\RR^d$ is \emph{very ample} if $\Hilb\big(\RR_+(R-v)\big)\subset R-v$  for every vertex $v$ of $R$.
\end{definition}

Normality implies very ampleness but not conversely; very ample polytopes define the normal projective toric varieties and very ample line bundles on them, which also explains the name; a lattice polytope $R\subset\RR^d$ is very ample if and only if the complement $\overline{M}(R)\setminus M(R)$ for the monoids introduced in Section \ref{Facts} is finite. For these and other generalities see  \cite[Sect.~2]{BDGM}.

We have already seen in Theorem \ref{infinitestrata} that the situation drastically changes from dimension $2$ to $3$: there is no uniform limit on the number of empty strata for all $P\in\np(3)$. For a fixed $P$ of any dimension there is however such a bound (even after relaxing the normality condition for $P$).

\begin{theorem}\label{finitestrata}
Let $P\subset\RR^d$ be a (not necessarily very ample) lattice $d$-polytope and let $z$ be a point in $\ZZ^d$ outside $P$. If $\Hilb(\RR_+(P-z))\subset P-z$ then
\begin{equation}
|\height_F(z)|\le 1+ (d-2)\width_F P\label{HBound}
\end{equation}
for every facet $F$ of $P$ that is visible from $z$. In particular, if the polytope $\conv(P,z)$ is very ample and $\Lat(\conv(P,z))=\Lat(P)\cup\{z\}$, then $|\height_F(z)|$ satisfies the bound \eqref{HBound}.
\end{theorem}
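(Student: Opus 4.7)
First I would reduce the ``in particular'' clause to the main statement. If $Q=\conv(P,z)$ is very ample and $\Lat(Q)=\Lat(P)\cup\{z\}$, then applying very ampleness at the vertex $z$ gives $\Hilb(\RR_+(Q-z))\subset Q-z$. Since $\RR_+(Q-z)=\RR_+(P-z)$ (adding the apex $0$ does not enlarge the cone) and the lattice points of $Q-z$ different from $0$ all lie in $\Lat(P)-z\subset P-z$, this recovers the main hypothesis $\Hilb(\RR_+(P-z))\subset P-z$.

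For the core inequality, I would translate so that $z=0$. Write $\height_F(x)=\alpha\cdot x+\beta$ with $\alpha\in\ZZ^d$ primitive. After translation the restriction of the linear form $\alpha$ to $P-z$ takes values in $[h,h+w]$, where $h=|\height_F(z)|$ and $w=\width_F P$; in particular every extremal generator of $C:=\RR_+(P-z)$ has $\alpha$-value in this range. The first key step is then immediate: since every nonzero lattice point of $C$ is a nonnegative integer combination of elements of $\Hilb(C)\subset P-z$, each of which contributes at least $h$ to the $\alpha$-value, we obtain
\begin{equation}\label{planAlpha}
\alpha(y)\ge h\qquad\text{for every }y\in (C\cap\ZZ^d)\setminus\{0\}.
\end{equation}

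The heart of the argument is the second step: exhibit a single nonzero lattice point $y_0\in C$ with $\alpha(y_0)\le 1+(d-2)w$; combined with \eqref{planAlpha} this yields $h\le 1+(d-2)w$. To construct $y_0$, I would fix a vertex $v_0\in\vertex(F)$ (so $\alpha(v_0-z)=h$) and a vertex $v\in\vertex(P)$ with $\height_F(v)=w$ (so $\alpha(v-z)=h+w$), together with $d-2$ further vertices $u_1,\ldots,u_{d-2}$ of $F$ chosen so that $\{v_0-z,u_1-z,\ldots,u_{d-2}-z,v-z\}$ is a basis of $\RR^d$. The associated simplicial subcone $C'\subset C$ has $\alpha$-row $(h,h,\ldots,h,h+w)$ in this basis; after subtracting the column $v_0-z$ from the others, the computation reduces to a $(d-1)\times(d-1)$ determinant in the sublattice of $\Aff(F)$ together with the single ``vertical'' entry $w$ coming from $v-z$. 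Applying Proposition \ref{ht-points} to the facet of $C'$ opposite $v-z$ (together with an application to the facet opposite $v_0-z$) produces, in $\Lpara(v_0-z,u_1-z,\ldots,u_{d-2}-z,v-z)$, a lattice point whose coefficients $(a_0,a_1,\ldots,a_{d-2},a_{d-1})\in[0,1)^d$ satisfy $a_0=1/h$ (one step of the $h$-stratification in the $v_0$-direction) and $a_i$ supported on the remaining $d-2$ ``width-$w$'' directions. The $\alpha$-value of such a point is
\[
h(a_0+a_1+\cdots+a_{d-2})+(h+w)a_{d-1}\le 1+(d-2)w,
\]
after the choice is optimized; this lattice point is the desired $y_0$.

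The main obstacle is the last step: choosing $u_1,\ldots,u_{d-2}\in\vertex(F)$ and then extracting from $\Lpara$ a lattice point whose coefficients give exactly the bound $1+(d-2)w$. One has to verify that such vertices $u_i$ exist (any generic affinely independent collection around $v_0$ works, provided one may need to pass to a refinement of a triangulation of $F$), and then to match the height-stratification coming from Proposition \ref{ht-points} against the bound. The argument is flexible in that it only uses \eqref{planAlpha}, which is an arithmetic consequence of very ampleness and does not use full normality; this is precisely why the statement is phrased for the weaker hypothesis $\Hilb(\RR_+(P-z))\subset P-z$ and will also cover the very ample situation mentioned in the ``in particular'' clause.
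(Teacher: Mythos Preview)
Your reduction of the ``in particular'' clause is fine, and inequality \eqref{planAlpha} is correct. The problem is entirely in your ``second step'', and it is a real gap, not just a missing detail.

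First, the strategy itself is circular as stated. You want to produce a nonzero $y_0\in C\cap\ZZ^d$ with $\alpha(y_0)\le 1+(d-2)w$, and then conclude $h\le\alpha(y_0)\le 1+(d-2)w$ from \eqref{planAlpha}. But \eqref{planAlpha} also tells you that \emph{every} nonzero lattice point of $C$ has $\alpha\ge h$; in particular any vertex of $F$ already has $\alpha=h$. So the existence of $y_0$ with $\alpha(y_0)\le 1+(d-2)w$ is equivalent to the inequality $h\le 1+(d-2)w$ you are trying to prove. Your construction must therefore produce such a $y_0$ by an argument that does not presuppose the bound, and that is exactly where your plan breaks down.

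Concretely, the claim that one can extract from $\Lpara(v_0,u_1,\dots,u_{d-2},v)$ a lattice point with $a_0=1/h$ is not justified by Proposition~\ref{ht-points}. That proposition stratifies $\Lpara$ by the \emph{height function of a facet of the simplicial cone} $C'$, which is a linear form vanishing on the chosen facet; it has nothing to do with the value~$h=\alpha(v_0)$. The height of $v_0$ over the opposite facet is $\mu(C')/\mu(\text{facet})$, a quantity depending on your particular choice of $u_i$ and $v$, not on $h$. Even granting $a_0=1/h$, the remaining contribution $h(a_1+\dots+a_{d-2})+(h+w)a_{d-1}$ to $\alpha(y_0)$ can be as large as $(d-1)h+w$, so ``after the choice is optimized'' hides the whole difficulty.

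The paper's argument avoids this trap by working at the \emph{other} end of the $\alpha$-scale. Using that $(d-1)\conv(F,0)$ is normal (Theorem~\ref{Encapsulates}(f)) and that a normal polytope has a lattice point at height~$1$ over each facet (Theorem~\ref{Encapsulates}(e)), one obtains a lattice point $x\in C$ with $\alpha(x)=(d-1)h-1$. Writing $x$ as a sum of Hilbert basis elements (all in $P$, hence with $\alpha\in[h,h+w]$), the equation $\alpha(x)<(d-1)h$ forces at most $d-2$ summands, whence $(d-1)h-1=\alpha(x)\le(d-2)(h+w)$, i.e.\ $h\le 1+(d-2)w$. The point is that one needs \emph{two} bounds on the same $\alpha(x)$---a lower/exact one from the construction and an upper one from the summand count---rather than a single small point combined with \eqref{planAlpha}.
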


\begin{proof}
By applying the parallel translation by $-z$, we can assume $z=0$. Denote $R=\conv(P,0)$. Let $F$ be a facet of $P$, visible from $0$. By Theorem \ref{Encapsulates}(f), the dilated polytope $(d-1)\conv(F,0)$ is normal. Hence, by Theorem \ref{Encapsulates}(e), there exists a lattice point $x\in(d-1)\conv(F,0)$ on lattice height $1$ above the facet $(d-1)F\subset(d-1)\conv(F,0)$.

Because $\Hilb(\RR_+P)\subset P$, the point $x$ is a positive integral linear combination of lattice points of $P$. However, $x$ cannot be the sum of $(d-1)$ or more such points because the $\height_F$-value of the sum will be at least $(d-2)|\height_F(0)|$, whereas $\height_F(x)=(d-2)\height_F(0)-1.$

In particular, $x$ is the sum of at most $(d-2)$ points from $\Lat(P)$. The largest $\height_F$-value of such a sum is $\width_F P+(d-3)(\width_F P+|\height_F(0)|)$, forcing
\begin{align*}
\height_F(x)=(d-2)|\height_F(0)|-1\le\width_F& P+(d-3)(\width_F P+|\height_F(0)|)\\
&\Longrightarrow\quad|\height_F(0)|\le1+(d-2)\width_F P.\qedhere
\end{align*}

\end{proof}

\begin{remark}\label{nonnormal}
One should note that in the special case when $\big(P,\conv(P,z)\big)$ is a jump, Theorem \ref{ParaCrit} contains information beyond the bound in Theorem \ref{finitestrata}: the multiplicity of $F$ also plays an essential role. We have already observed this in Remark \ref{emptyjump}. Furthermore, if $\conv(P,z)$ is normal but $P$ is not, then one can show based on Theorem \ref{ParaCrit} that the bound in Theorem \ref{finitestrata} can be improved to $|\height_F(z)|\le (d-2)\width_F P$.
\end{remark}

We will see below that the bound in Theorem \ref{finitestrata} cannot be improved, not even for quantum jumps of any dimension $d$.

As a consequence the number of lattice points that are candidates for quantum jumps over a polytope $P$ is bounded, and the set of candidates can be efficiently described: the candidates are contained in the set
$$
\Lat\left(P^{-1-(d-2)\width P}\right)\setminus P.
$$
This is the basis of our experiments with quantum jumps that helped us to find maximal elements in $\np(d)$ for $d=4$ and $d=5$.

Our next theorem shows that the bound in Theorem \ref{finitestrata} is sharp even for normal polytopes.

\begin{theorem}\label{sharp}
For every natural number $d\ge2$ and $w\ge1$ there exists a jump $(P,Q)$ of dimension $d$ satisfying the following conditions:
\begin{enumerate}[{\rm(a)}]
\item The vertex of $Q$, not in $P$, is visible from exactly one facet $F\subset P$,
\item $\width_FP=w$,
\item
$\height(P,Q)=(d-2)w+1$.
\end{enumerate}
\end{theorem}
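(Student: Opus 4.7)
The plan is to exhibit an explicit one-parameter family of simplicial jumps saturating the bound of Theorem \ref{finitestrata}. I would take
\begin{align*}
P &= \conv(0, \e_1, \ldots, \e_{d-1}, w\e_d), & F &= \conv(0, \e_1, \ldots, \e_{d-1}),\\
z &= \e_1 + \cdots + \e_{d-1} - ((d-2)w+1)\e_d, & Q &= \conv(P, z).
\end{align*}
With facet-height function $\height_F = x_d$, one immediately has $\width_F P = w$ and $\height_F(z) = -((d-2)w+1)$, which will yield (b) and (c). For (a), the remaining facets of $P$ are the hyperplanes $\{x_i = 0\}$ for $i < d$ and the top facet $\{w\sum_{i<d} x_i + x_d = w\}$; the inequalities $z_i = 1 \ge 0$ and $w(d-1) - ((d-2)w+1) = w-1 < w$ place $z$ on the $P$-side of each, so only $F$ is visible from $z$.

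Substantively, I would then verify that $(P, Q)$ is a quantum jump. Normality of $P$ follows by reading off $\Lat(P) = \{0, \e_1, \ldots, \e_{d-1}\} \cup \{k\e_d : 0 \le k \le w\}$ from the inequality description and decomposing any $x \in \Lat(nP)$ into $n$ summands: use $x_i$ copies of $\e_i$ for $i < d$ and then $n - \sum_{i<d} x_i$ summands drawn from $\{0, \e_d, \ldots, w\e_d\}$ whose $\e_d$-coefficients total $x_d$, which is feasible because the top-facet inequality forces $x_d \le w(n - \sum_{i<d} x_i)$. For $\Lat(Q) = \Lat(P) \cup \{z\}$, I would parametrize a lattice point $q \in \conv(F, z)$ by its barycentric coordinates $(\lambda_0, \ldots, \lambda_d)$; integrality of $q_d$ forces $\lambda_d = k/((d-2)w+1)$ for an integer $k \in [0, (d-2)w+1]$, and for an intermediate value $0 < \lambda_d < 1$ the positivity of $\lambda_d$ and integrality of $q_i = \lambda_i + \lambda_d$ force $q_i \ge 1$, yielding $(d-1)(1-\lambda_d) \le \sum_{i=1}^{d-1} \lambda_i \le 1 - \lambda_d$ — impossible for $d \ge 3$ and vacuous for $d = 2$, where $L := (d-2)w+1 = 1$ leaves no intermediate $k$.

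The delicate step is normality of $Q$, which I would establish via Theorem \ref{ParaCrit} with the trivial triangulation $\Sigma_F = \{F\}$. A direct parametrization shows that the nonzero lattice points of $\Lpara(F, z) \subset \ZZ^{d+1}$ form the chain $y_k = (1, \ldots, 1, -k,\ (d-1) - \lfloor (d-2)k/L \rfloor)$ for $k = 1, \ldots, (d-2)w$; each satisfies $\height_F(y_k) = -k < 0$, so the criterion demands $y_k - z' \in C(P)$. As the first $d-1$ coordinates of $y_k - z'$ vanish, only the cone generators $(0, 1)$ and $(w\e_d, 1)$ are relevant, and writing $y_k - z' = c_0(0,1) + c_d(w\e_d,1)$ reduces normality to the single arithmetic inequality $k \ge 1 + w\lfloor (d-2)k/L\rfloor$. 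This is the main obstacle: setting $q := \lfloor (d-2)k/L\rfloor$, the defining bound $qL \le (d-2)k$ rearranges to $(d-2)(k - qw) \ge q$, forcing the integer $k - qw$ to be at least $1$ in both the case $q \ge 1$ (where it exceeds the positive rational $q/(d-2)$) and the trivial case $q = 0$. Theorem \ref{ParaCrit} then yields the normality of $Q$, completing the construction for all $d \ge 2$ and $w \ge 1$.
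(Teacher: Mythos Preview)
Your proof is correct and follows essentially the same route as the paper: the polytope $P=\conv(0,\e_1,\ldots,\e_{d-1},w\e_d)$ and the point $z=\e_1+\cdots+\e_{d-1}-((d-2)w+1)\e_d$ are exactly the reflection across $\{x_d=0\}$ of the paper's construction, and both proofs verify normality of $Q$ via Theorem~\ref{ParaCrit} by computing $\Lpara(F,z)$ explicitly. Your direct decomposition argument for the normality of $P$ and your clean arithmetic verification of $k\ge 1+w\lfloor(d-2)k/L\rfloor$ are minor variations (the paper instead builds $P$ by a chain of height-$1$ jumps and checks the points $y_k-z'$ by inspection), but the substance is the same.
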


\begin{figure}[hbt]
\begin{center}
\tikzset{facet style/.style={opacity=1.0,very thick,line,join=round}}
\begin{tikzpicture}[x  = {(-0.5cm,-0.5cm)},
                    y  = {(0.9659cm,-0.25882cm)},
                    z  = {(0cm,1cm)},
                    scale = 2]
\draw [->,dashed] (-2.5, 0, 0) -- (2.5,0,0) node at (2.6,0,0) {$x_1$};

\draw [->,dashed] (0, -2, 0) -- (0,2.0,0) node at (0,2.1,0) {$x_2$};

\draw [->,dashed] (0, 0, -1.5) -- (0,0,1.5) node at (0,0,1.6){$x_3$};

\draw[very thin] (0,0,-1) -- (0,0,0) -- (1,0,0) -- cycle;
\draw[very thin] (0,0,-1) -- (0,0,0) -- (0,1,0) -- cycle;
\draw[very thick] (0,0,-1) -- (1,0,0) -- (0,1,0) -- cycle;
\draw[very thin]  (0,0,0) -- (0,1,0) -- (1,1,2) -- cycle;
\draw[very thin]  (0,0,0) -- (1,0,0) -- (1,1,2) -- cycle;
\draw[very thick]  (0,1,0) -- (1,0,0) -- (1,1,2) -- cycle;
\filldraw[fill=white] (0,0,0) circle (1pt);
\filldraw[fill=white] (0,0,-1) circle (1pt);
\filldraw[fill=white] (1,0,0) circle (1pt);
\filldraw[fill=white] (0,1,0) circle (1pt);
\filldraw[fill=white] (1,1,2) circle (1pt) node at (1.3, 1.3,2.4){$z$};
\end{tikzpicture}
\caption{The polytope of Theorem \ref{sharp} for $d=3$, $w=1$}
\end{center}
\end{figure}
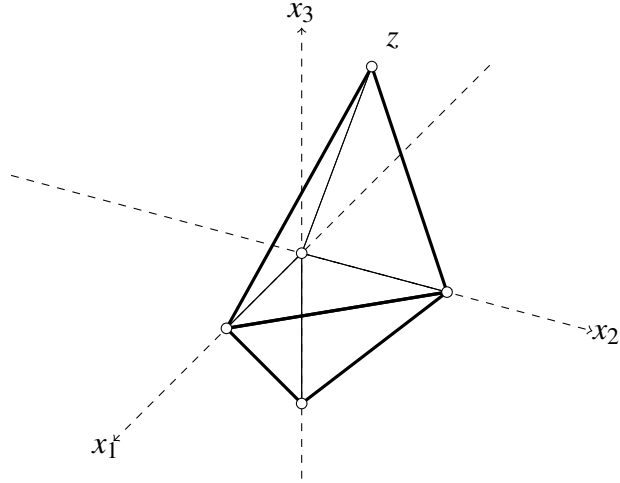

\begin{proof}
There is nothing to show for $d=2$. Therefore we assume $d\ge 3$.

We choose the polytope $P$ to be spanned by the vertices $0$, $\e_1,\dots,\e_{d-1}$ and $-w\e_d$. It is the top element of the unique chain of length $w-1$ in $\np(d)$, starting with the unimodular simplex $\conv(0,\e_1,\ldots,\e_{d-1},-\e_d)$ and finishing with $P$. In particular, $P$ is normal. Over the `horizontal' facet $F$ spanned by $0$ and the $\e_i$, $i\leq d-1$, it has width $w$.

Let
$$
z=(1,\dots,1,(d-2)w+1)=\e_1+\dots+\e_{d-1}+((d-2)w+1)\e_d.
$$
It is easy to check that $z$ is the only additional lattice point in $Q=\conv(P,z)$, that it has height $(d-2)w+1$ over $F$, and that $F$ is the only facet of $P$ that is visible from $F$.

The critical issue is the normality of $Q$. For the application of Theorem \ref{ParaCrit} it is advisable to use homogenized coordinates in $\RR^{d+1}$, as usual indicated by $\vphantom{i}'$.

We claim that the nonzero points in $\Lpara(F,z)$ are given by
$$
y_k=u(\e_1'+\dots+\e_{d-1}') + vz' + t0',
$$
where
\begin{align*}
v&=\frac{k}{w(d-2)+1},\qquad k=1,\dots,w(d-2),\\
u&=1-v,\\
t&= \lceil s \rceil-s,\qquad    s=(d-1)u + v.
\end{align*}
In fact, since $F$ is unimodular and $\vol_d(\conv(F,z))=(d-2)w+1$, there are exactly $w(d-2)$ points in $\Lpara(F,z)$, one on height $k$ above $C(F)$ for each $k=1,\dots,w(d-2)$ (Proposition \ref{ht-points}). Hence the indicated values of $v$ are as above. The sum $u+v$ must be integer and $0\le u<1$, which motivates the value of $u$. The last coordinates of the points $y_k$ are integers and, simultaneously, $0\le t<1$, yielding the indicated values for $t$.

We have
$$
y_k=(1,\dots,1,k,h_k),\qquad k=1,\dots,(d-2)w.
$$

For the difference $y_{(d-2)w+1-k}-z'$ one obtains
\begin{align*}
    (0,...,0,-k,1),\qquad k&=1,...,w,\\
    (0,...,0,-k,2),\qquad k&=w+1,...,2w,\\
    &\vdots\\
    (0,...0,-k,d-2),\qquad k&=(d-3)w+1,...,(d-2)w,
\end{align*}
where the $(d+1)$-st coordinates on the left are computed by the formula
$$
h_k-1=\lceil s\rceil-1=\bigg\lceil\frac{(d-2)(w+k)+1}{(d-2)w+1}\bigg\rceil-1.
$$
All these points lie in $C(P)$, i.e., after dehomogenization with respect to the last coordinate we get points in $P$.
\end{proof}

\begin{remark}
Theorems \ref{Dim3} and \ref{sharp} rely on the exact knowledge of the distribution of the numbers $\height_F(x)$ in the critical areas relative to $\deg x$.

In the proof of Theorem \ref{sharp} there is only a single facet $F$ visible from $z$ and all facets of $\conv(F,z)$ are not only empty, but even unimodular. This follows from the fact that all nonzero elements of $\Lpara(F,z)$ are in the interior. But even under these `optimal' conditions it seems difficult to find a transparent generalization of Theorem \ref{Dim3} to higher dimensions.

It is instructive to compute the heights of the elements of $\Lpara(F,z)$ over the other facets of $\conv(F,z)$ from the proof of Theorem \ref{sharp} for $d=4$, $w=2$. Over $F$ the degree $2$ elements have heights $3$ and $4$, and the degree $3$ elements have heights $1$ and $2$ (and the height $1$ element lets us reach the upper bound). Over the other facets the height distributions are $1,2$ in degree $2$, vs.\ $3,4$ in degree $3$ (three facets)  and  $3,1$ in degree $2$ vs.\ $2,4$ in degree $3$.

This example shows that one cannot predict a priori the distribution of heights over a facet of an empty $4$-simplex, not even if all facets are unimodular.
\end{remark}

\section{Spherical polytopes}\label{Spherical-states}

Throughout this section we fix a natural number $d\ge 2$.

Under certain constrains on the shapes of the normal polytopes we can derive more stringent bounds on the heights of quantum jumps than in Theorem \ref{finitestrata}. More precisely, in this section we show that for asymptotically spherical polytopes the heights of jumps become infinitesimally small compared to the widths. This also naturally leads to interesting number theoretical questions.

\subsection{Asymptotically infinitesimal jumps.}\label{Bounding-jumps2}
Below we will need the following criterion for quantum jumps, which is reminiscent of a dehomogenized version of Theorem \ref{ParaCrit}:

\begin{theorem}\label{norm-criterion}
Let $P\in\np(d)$ with $0\notin P$ and $Q=\conv(P,0)$. Then the following conditions are equivalent:
\begin{enumerate}[{\rm (a)}]
\item
$(P,Q)$ is a jump,
\item
$\Lat\big(kQ\setminus((k-1)Q\cup kP)\big)=\emptyset$ for all $k\in\NN$,
\item
$\Lat\big(kQ\setminus((k-1)Q\cup kP)\big)=\emptyset$ for $k=1,\ldots,d-1$.
\end{enumerate}
\end{theorem}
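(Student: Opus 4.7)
The plan is to close the cyclic chain (a) $\Rightarrow$ (b) $\Rightarrow$ (c) $\Rightarrow$ (a). The middle step (b) $\Rightarrow$ (c) is immediate by restricting $k$ to $\{1,\ldots,d-1\}$.

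For (a) $\Rightarrow$ (b), fix $k\in\NN$ and a lattice point $y\in\Lat(kQ)$. Normality of $Q$ yields $y=\sum_{i=1}^{k}x_i$ with every $x_i\in\Lat(Q)$. Since $(P,Q)$ is a jump and $0\in Q\setminus P$, one has $\Lat(Q)=\Lat(P)\cup\{0\}$. Either at least one $x_i=0$, in which case $y$ is a sum of $k-1$ elements of $Q$ and therefore lies in $(k-1)Q$ by the Minkowski identity $(k-1)Q=Q+\cdots+Q$, or every $x_i\in\Lat(P)$ and $y\in kP$. Either way $y\in(k-1)Q\cup kP$.

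The substantive direction is (c) $\Rightarrow$ (a). The case $k=1$ of (c), under the convention $0\cdot Q=\{0\}$, forces $\Lat(Q)\subset\Lat(P)\cup\{0\}$, and the reverse inclusion is obvious, so $\#\Lat(Q)=\#\Lat(P)+1$. It remains to establish normality of $Q$. The essential leverage is Lemma \ref{HilbDeg}: every element of $\Hilb(C(Q))$ has degree at most $d-1$. Since the degree-$1$ part of $\Hilb(C(Q))$ is automatically $\{(x,1):x\in\Lat(Q)\}$, normality of $Q$ reduces to showing that every degree-$k$ lattice point $(y,k)\in\Lat(C(Q))$, with $2\le k\le d-1$, is reducible in $\Lat(C(Q))$. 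Condition (c) places such a $y$ in $\Lat((k-1)Q)\cup\Lat(kP)$: in the first case $(y,k)=(y,k-1)+(0,1)$ is a nontrivial decomposition in $\Lat(C(Q))$ (using that $(0,1)\in C(Q)$ because $0\in\Lat(Q)$), while in the second case normality of $P$ writes $y$ as a sum of $k$ elements of $\Lat(P)\subset\Lat(Q)$. Either way $(y,k)$ is reducible, so $\Hilb(C(Q))$ lives entirely in degree $1$ and $Q$ is normal.

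The main obstacle to anticipate is the apparent mismatch between the finite-range hypothesis in (c) and the all-$k$ conclusions in (a) and (b); Lemma \ref{HilbDeg} is precisely the tool that bridges this gap, by confining potential obstructions to normality of $Q$ to degrees $\le d-1$, exactly the range addressed by (c).
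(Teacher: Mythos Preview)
Your proof is correct and follows essentially the same route as the paper: the implication (a) $\Rightarrow$ (b) is argued identically, and for (c) $\Rightarrow$ (a) both proofs invoke Lemma \ref{HilbDeg} to confine potential Hilbert basis elements of $C(Q)$ to degrees $\le d-1$ and then use the dichotomy $(k-1)Q$ vs.\ $kP$ together with normality of $P$ to force reducibility. Your version is a direct argument rather than the paper's proof by contradiction, and you are more explicit about extracting $\#\Lat(Q)=\#\Lat(P)+1$ from the case $k=1$ (with the convention $0\cdot Q=\{0\}$), a point the paper leaves implicit.
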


\begin{proof}


In the following we use the monoids $M(Q)$ and $\overline M(Q)$ and their $k$-th degree components $M(Q)_k$ and $\overline M(Q)_k$, introduced in Section \ref{normal-polytope}.

\smallskip (a)$\implies$(b) 
Consider $x\in \Lat(kQ)$. By normality, $x=\sum_{i=1}^k q_i$ for $q_i\in \Lat(Q)$. If there exists $q_i=0$, we may omit it in the sum, hence $x\in \Lat((k-1)Q)$. Otherwise all $q_i\in \Lat(P)$, hence $x\in \Lat(kP)$.

\smallskip (b)$\implies$(c) is obvious.

\smallskip (c)$\implies$(a) Assume $(P,Q)$ is not a jump. Let $k$ be the smallest natural number for which $M(Q)_k\subsetneq\overline M(Q)_k$ (notation as above). Since $P$ is normal and $k\ge2$, we must have
$$
\Lat\big(kQ\setminus\big((k-1)Q\cup kP\big)\not=\emptyset.
$$
Therefore, (c) implies that the monoids $M(Q)$ and $\overline M(Q)$ coincide up to degrees $d-1$. But then, in view of Lemma \ref{HilbDeg}, the two monoids are equal -- a contradiction.
\end{proof}

The closed $d$-ball in $\RR^d$ with radius $r$ and centered at $z\in\RR^d$ will be denoted by $\B(z,r)$.

\begin{theorem}\label{spherical}
Let $P_i\in\np(d)$, $z_i\in\RR^d$, and $r_i,\epsilon_i$ be positive real numbers, where $i\in\NN$. Assume $\lim_{i\to\infty}r_i=\infty$ and $\B(z_i,r_i-\epsilon_i)\subset P_i\subset\B(z_i,r_i+\epsilon_i)$ for all $i\gg0$.
\begin{enumerate}[{\rm (a)}]
\item
If $\lim_{i\to\infty}\frac{\epsilon_i}{r_i}=0$ then
$$
\lim_{i\to\infty}\max\left(\frac{\height(P_i,Q)}{\width P_i}\ :\ (P_i,Q)\ \text{a jump}\right)=0.
$$
\item
If $\epsilon=\epsilon_1=\epsilon_2=\cdots$ then for all $i\gg0$ and all jumps $(P_i,Q)$ we have
\begin{align*}
\min\big(\|v-x\|\ :\ x\in P_i\big)<47\epsilon+12,
\end{align*}
where $Q=\conv(P_i,v)$.
\end{enumerate}
\end{theorem}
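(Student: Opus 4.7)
The strategy for both statements is Theorem \ref{norm-criterion}: if $(P_i, Q)$ with $Q=\conv(P_i,v)$ is a jump, then for every $k\ge 1$ the set $S_k := kQ\setminus((k-1)Q\cup kP_i)$ contains no lattice point. Arguing by contradiction, I translate $v$ to the origin and, assuming the Euclidean distance $D_i := \dist(0, P_i)$ is too large, exhibit a lattice point in $S_k$ for some $k\in\{2,\dots,d-1\}$. (The case $d=2$ is trivial since every quantum jump in $\np(2)$ has height $1$ by Proposition \ref{non-maximal}(b).)

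Writing $\B_i^\pm = \B(z_i, r_i\pm\epsilon_i)$, the hypothesis $\B_i^- \subset P_i \subset \B_i^+$ yields the inclusion
\[
S_k \supseteq \conv(0, k\B_i^-) \setminus \bigl(k\B_i^+ \cup \conv(0, (k-1)\B_i^+)\bigr),
\]
whose right-hand side can be analysed in closed form via the tangent cones from $0$ to $\B_i^\pm$. It is rotationally symmetric about the axis $0 z_i$ and sits as a lens just inside the tangent cone to $\B_i^-$, of half-angle $\theta_i^- = \arcsin((r_i-\epsilon_i)/\|z_i\|)$. For $k=2$, working in the 2-dimensional slice through $0$ and $z_i$, direct trigonometry shows that the lens has length of order $\sqrt{r_i D_i} - O(\sqrt{r_i\epsilon_i})$ along the tangent ray and transverse inward thickness of order $D_i/8 - O(\epsilon_i)$; under the rotation it sweeps a $(d-2)$-sphere of radius $\sim\sqrt{r_i D_i}$. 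Once both thin Euclidean dimensions exceed $\sqrt d$, a ball of radius $\sqrt d/2$ embeds in the lens, and by the covering-radius property of $\ZZ^d$ it must contain a lattice point in $S_2$, contradicting the jump hypothesis.

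Extracting the explicit bound $D_i < 47\epsilon + 12$ in part~(b) amounts to combining the radial non-emptiness condition $D_i > 2(2k-1)^2\epsilon = 18\epsilon$ at $k=2$, the transverse fit requirement $D_i/8 - O(\epsilon) \ge \sqrt d$, and the $\epsilon$-slack needed for $i\gg 0$; the quoted constants $47$ and $12$ are a clean (non-optimal) upper bound absorbing the lower-order error terms. Part~(a) then follows by converting the Euclidean bound into the lattice height-to-width ratio: for each visible facet $F$ with primitive integer normal $\alpha_F$, the tangent-hyperplane geometry of a spherical body gives $\dist(v, \Aff(F)) \le D_i$, so $|\height_F(v)| \le \|\alpha_F\|\, D_i$; meanwhile $\width_{F} P_i \ge 2(r_i - \epsilon_i)\|\alpha_F\|$ for every facet $F$. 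Picking the same $F$ for both estimates yields $\height(P_i, Q)/\width P_i \le D_i/(2(r_i - \epsilon_i))$, which tends to $0$ since the part~(b) argument applied with $\epsilon = \epsilon_i$ gives $D_i = O(\epsilon_i) + O(1)$ and $\epsilon_i/r_i \to 0$. The main obstacle is the geometric bookkeeping of the $\epsilon$-perturbed lens: translating the three set-theoretic conditions defining $S_k$ into explicit trigonometric bounds while tracking which tangent cone (to $\B_i^-$ or to $\B_i^+$) controls each boundary component of $S_k$, and showing that the lens contains a sufficiently large Euclidean ball.
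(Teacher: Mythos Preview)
Your approach is essentially the paper's: invoke Theorem~\ref{norm-criterion}, translate $v$ to the origin, sandwich $2Q\setminus(Q\cup 2P)$ between regions built from the balls $\B_i^\pm$, and show this region contains a Euclidean ball large enough to meet $\ZZ^d$ whenever the jump is too far. The paper also uses only $k=2$.

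The one organizational difference matters, however. You propose to prove (b) first and derive (a) from it; the paper does the reverse. It first establishes the qualitative statement $\|z_i-v\|/r_i\to 1$ (equation~\eqref{almost-radius}) by a \emph{rescaling} argument: after dividing by $r_i$ the relevant region approximates a fixed torus $T_i$ of positive inradius $\rho$, so the original region contains a ball of radius $\sim\rho r_i\to\infty$. Only then does the paper turn to (b), where it \emph{uses} (a) to conclude $\delta_i/r_i\to 0$---and that is precisely what justifies the flat-lens asymptotics you state (``length $\sim\sqrt{r_iD_i}$'', ``thickness $\sim D_i/8$'' tacitly assume $D_i\ll r_i$). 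In your ordering nothing yet excludes $D_i\sim r_i$; in that regime your formulas are not valid as written, even though the region is still large. Your derivation of (a) from ``the part (b) argument applied with $\epsilon=\epsilon_i$'' inherits the same gap, and has the additional issue that the ``$i\gg0$'' threshold in (b) depends on $\epsilon$.

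The fix is exactly the paper's two-stage structure: first get $\|z_i-v\|/r_i\to 1$ by rescaling (no asymptotics needed), then use it to legitimize the explicit lens estimates for (b). With that in place your height/width reduction for (a) and your lens computation for (b) are correct in spirit; your thickness constant $D_i/8$ is slightly pessimistic compared to the paper's $D_i/(3+2\sqrt2)$, but this only affects the (non-sharp) final constants.
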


\begin{remark}\label{spherical-ellipsoidal}
Theorem \ref{spherical}(a) and its proof straightforwardly extend to the more general families of polytopes when instead of spheres one uses ellipsoids -- with fixed eccentricities in a family. We present the argument only in the spherical case in order to avoid cumbersome notation. Ellipsoids will appear explicitly in the next subsection in a more number theoretical context. Also, the proof below uses the following weaker condition than normality: the lattice points in the 2nd multiples of the polytopes in question are the sums of pairs of lattice points in the original polytopes.
\end{remark}

\begin{remark}\label{obtuse}
The strong metric bound in Theorem \ref{spherical}(b) does not necessarily translate into a strong bound for the corresponding heights. In fact, as $i$ gets larger, some facets of $P_i$ get increasingly \emph{sloped}, i.e., the ratio of the lattice and metric widths with respect to facets of $P_i$ can be made arbitrarily small as $i\to\infty$. In fact, the normal unit vectors to the facets of $P_i$ define increasingly dense subsets of the unit sphere $S^{d-1}$ as $i\to\infty$. In particular, any neighborhood of any integer nonzero point $w\in\ZZ^d$ meets a hyperplane of the form $\Aff(F)-v$ for some $F\in\FF(P_i)$ and $v\in\vertex(F)$ when $i\gg0$, depending on the neighborhood, so that $w$ is not in the hyperplane. Actually, the same argument shows that, when $i\to\infty$, the absolute majority of the facets of $P_i$ get increasingly sloped.
\end{remark}

\begin{proof}[Proof of Theorem \ref{spherical}] (a) First we observe that the claim follows from the following equality
\begin{equation}\label{almost-radius}
\lim_{i\to\infty}\max_v\left(\frac{\|z_i-v\|}{r_i}\ :\ \big(P_i,\conv(P_i,v)\big)\ \text{a jump}\right)=1,\qquad i\in\NN.
\end{equation}

In fact, the equality (\ref{almost-radius}) is equivalent to the claim that the ratios
$$
\frac{\|z_i-v\|-r_i}{r_i},
$$
where $\big(P_i,\conv(P_i,v)\big)$ is a jump, can be made arbitrarily close to $0$ by choosing $i$ sufficiently large. For an index $i$ and a jump $\big(P_i,\conv(P_i,v)\big)$ pick a facet $F_i\subset P_i$, visible from $v$ and such that
$$
\height\big(P_i,\conv(P_i,v)\big)=\height_{P_i}(v)=-\height_{F_i}(v).
$$
For all $i\gg0$ and all jumps $\big(P_i,\conv(P_i,v)\big)$ we have the inequalities
\begin{align*}
\frac{\height\big(P_i,\conv(P_i,v)\big)}{\width P_i}&\le\frac{\height_{P_i}(v)}{\width_{F_i}P_i}\le\\
&\frac{\inf_{\Aff(F_i)}\|\Aff(F_i)-v\|}{2(r_i-\epsilon_i)}\le\left|\frac{\|z_i-v\|-r_i+\epsilon_i}{2(r_i-\epsilon_i)}\right|.
\end{align*}
So Theorem (\ref{spherical})(a) follows from (\ref{almost-radius}).

Next we prove (\ref{almost-radius}). Assume to the contrary that the considered limit is not $1$. This means that infinitely many of the considered ratios exceed $1$ by some real number $\theta>0$. After picking the corresponding subsequence and re-indexing, we can assume that there are jumps $Q_i=\big(P_i,\conv(P_i,v_i)\big)$ such that
\begin{equation}\label{separated-from-0}
\theta_i:=\frac{\|z_i-v_i\|-r_i}{r_i}\ge\theta,\qquad i\in\NN.
\end{equation}

Applying the parallel translations by the vectors $-v_i$, we can further assume that $v_i=0$ for all $i$. By Theorem \ref{norm-criterion}, we have
\begin{equation}\label{one-two-multiples}
\Lat\big(2Q_i\setminus(Q_i\cup 2P_i)\big)=\emptyset,\qquad i\in\NN.
\end{equation}

As $i\to\infty$, the subsets
\begin{align*}
T_i:=\conv\big(\B\big((2&+2\theta_i)\e_1,2\big)\cup\{0\}\big)\setminus\\
&\big(\conv\big(\B\big((1+\theta_i)\e_1,1\big)\cup\{0\}\big)\cup\B((2+2\theta_i)\e_1,2)\big)\subset\RR^d
\end{align*}
become approximately congruent with increased precision to the rescaled subsets
$$
\frac1{r_i}\big(2Q_i\setminus(Q_i\cup 2P_i)\big)\subset\RR^d.
$$
The following can be said on the geometry of the closure $\bar T_i\subset\RR^d$ of $T_i$ in the Euclidean topology:
\begin{enumerate}[{\rm (i)}]
\item
$\bar T_i$ is homeomorphic to a $d$-torus;
\item
$\bar T_i$ is invariant under rotation of $\RR^d$ about the axis
$\RR \e_1$.
\end{enumerate}

These properties, together with the inequalities (\ref{separated-from-0}), imply the existence of a real number $\rho>0$ such that for every index $i$ the set $\bar T_i$ contains a ball $\B_i'$ of radius $\rho$. The easiest way to see this is by induction over $d$: the case $d=2$ is obvious and every such ball $\B_{d'}$ of radius $\rho'$ in dimension $d'<d$ gives rise by revolution about the axis $\RR_+\e_1$ to a torus of dimension $d'+1$, which in its turn contains a $(d'+1)$-ball of radius $\rho$ only depends on the radius of $\B_{d'}$; see Figure \ref{ball-cone}. (It is an exercise to show that, actually, we can take $\rho'=\rho$.)

\begin{figure}[h!]
\vspace{.15in}
\includegraphics[trim = 0mm 0.1in 0mm 0.1in, clip, scale=.5]{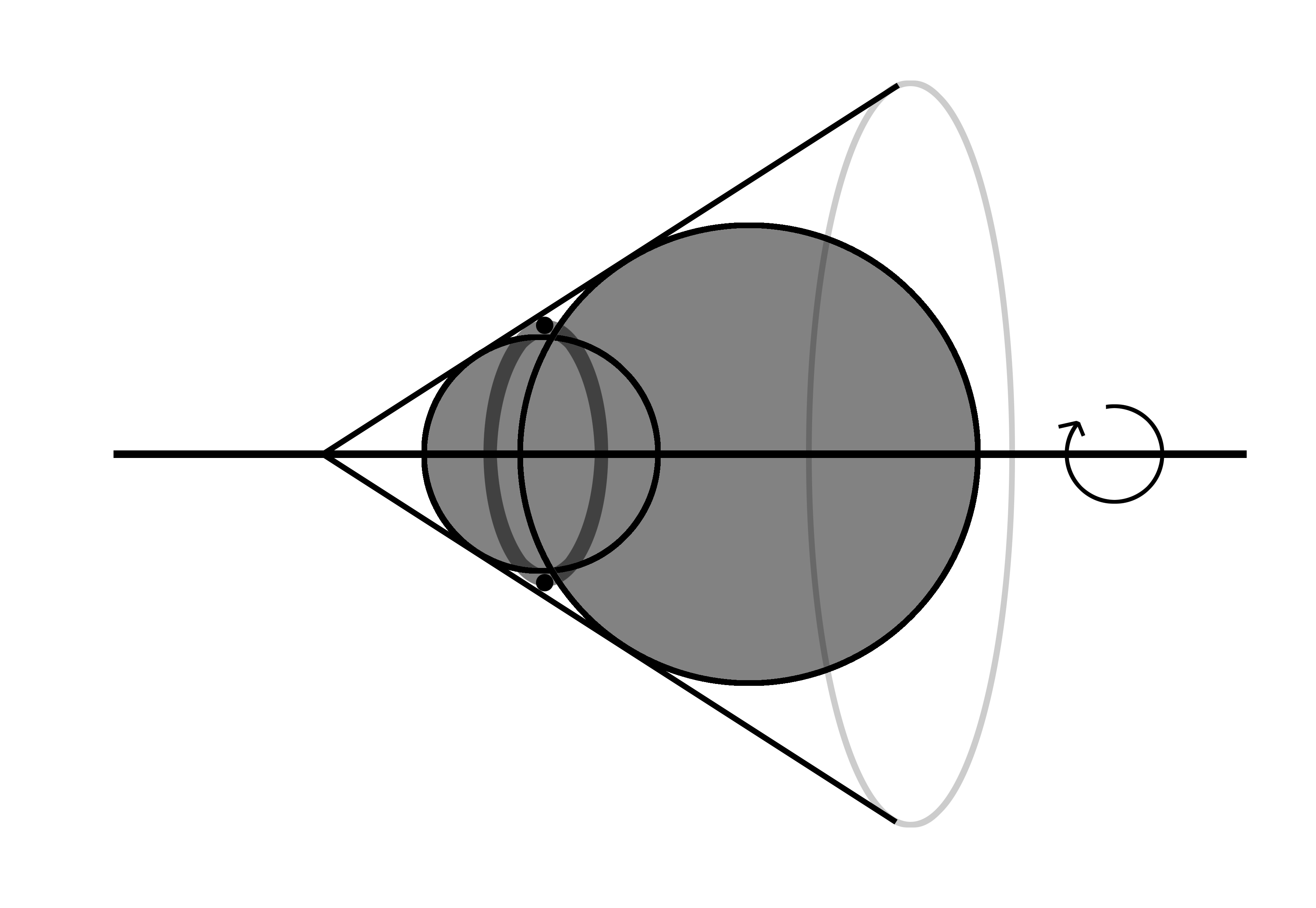}
\caption{Generating a $d$-torus in $\bar T_i$ by revolving a $(d-1)$-ball}\label{ball-cone}
\end{figure}

We see that, for any real number $0<\kappa<1$, the subsets
$$
2Q_i\setminus(Q_i\cup 2P_i)\subset\RR^d
$$
contain balls of radius $\kappa\rho r_i$ whenever $i\gg0$. But this contradicts (\ref{one-two-multiples}) because $\kappa\rho r_i\to\infty$ as $i\to\infty$.

\MEDSCIP\NOINDENT (b) Pick a sequence of jumps $(P_i,Q_i)$, $i\in\NN$. Without loss of generality we can assume $\Lat(Q_i)\setminus P_i=\{0\}$.
By Theorem \ref{norm-criterion}, we have the same equality (\ref{one-two-multiples}) as in the proof of the part (a).

For simplicity of notation, put $\B_i=\B(z_i,r_i)$. Consider the subset
$$
R_i=\conv(2\B_i\cup \{0\})\setminus\big(\conv(\B_i\cup\{0\})\cup 2\B_i\big)\subset\RR^d
$$
and consider the closure $\bar R_i\subset\RR^d$ of $R_i$ in the Euclidean topology. The set $\bar R_i$ is homeomorphic to a $d$-torus, invariant under rotation of $\RR^d$ about the axis $\RR z_i$.

Let $C_i\subset\RR^d$ be the cone $\RR_+\B_i$. Then the boundary $\partial C_i$ is tangent to the ball $\B_i$. Let $\delta_i$ denote the distance from $0$ to any point $B_i\cap\partial C_i$.

First we observe that the part (a) implies
\begin{equation}\label{delta-zero}
\lim_{i\to\infty}\frac{\delta_i}{r_i}=0.
\end{equation}

For every index $i$, the set of farthest points of $\bar R_i$ from $\partial C_i$, which can be connected to $\partial C_i$ by segments perpendicular to $\partial C_i$ and entirely inside $\bar R_i$, form a circle $S_i$ with center in the line $\RR_+ z_i$. Assume the distance from $S_i$ to $\partial C_i$ is $h_i$. Then
\begin{equation}\label{length-estimate}
\delta_i=\sqrt{4r_i^2-(2r_i-h_i)^2}+\sqrt{r_i^2-(r_i-h_i)^2},
\end{equation}
as follows from Figure \ref{metric}, representing a section by any 2-dimensional plane in $\RR^d$ through $0$ and $z_i$.
\begin{figure}[h!]
\begin{center}
\footnotesize
\tikzset{facet style/.style={opacity=1.0,very thick,line,join=round}}
\begin{tikzpicture}
\draw[fill=black] (0,0) circle (2pt);
\draw[thick] (0,0) -- (8,0) node at (-0.2,0){0};
\draw[thick] (0,0) -- (0.8,9.144);
\draw[thick] (2,1.8326) circle (1.8326);
\draw[fill=black] (2,1.8326) circle (2pt) node at (2,2.13){$z_i$};
\draw[fill=black] (2,0) circle (1pt);
\draw[thick] (2,1.8326) -- (2,-0.5) node at (1.8,1.4){$r_i$};
\draw[thick] (4,3.6652) circle (3.6652);
\draw[thick] (4,3.6652) -- (4,-0.5) node at (4.4,2.3){$2r_i$};
\draw[thick] (0,0) -- (0,-0.5);  
\draw[fill=black] (4,3.6652) circle (2pt) node at (4,4.0){$2z_i$};
\draw[fill=black] (4,0) circle (1pt);
\draw[fill=black] (2.822,0.195) circle  (2pt) node at (2.822,0.7){$x_i$} node at (2.822,-0.2){$y_i$};
\draw[very thin] (1.6,0.195) --  (4.4,0.195) ;
\draw[fill=black] (2.822,0) circle (2pt);
\draw[thick] (2.822,0.195) -- (2.822,0);
\draw[thick] (2,1.8326) -- (2.822,0.195) -- (4,3.6652);
\draw[very thin, <->] (0,-0.5) -- (2,-0.5) node at (1,-0.8){$\delta_i$};
\draw[very thin, <->] (2,-0.5) -- (4,-0.5) node at (3,-0.8){$\delta_i$};
\draw[very thin, ->] (4.3,0.695) -- (4.3,0.195) node at (6,0.2){$h_i$};
\draw[very thin, ->] (4.3,-0.5) -- (4.3,0);
\end{tikzpicture}
\end{center}
\vspace{.15in}
\caption{Planar cross section}\label{metric}
\end{figure}
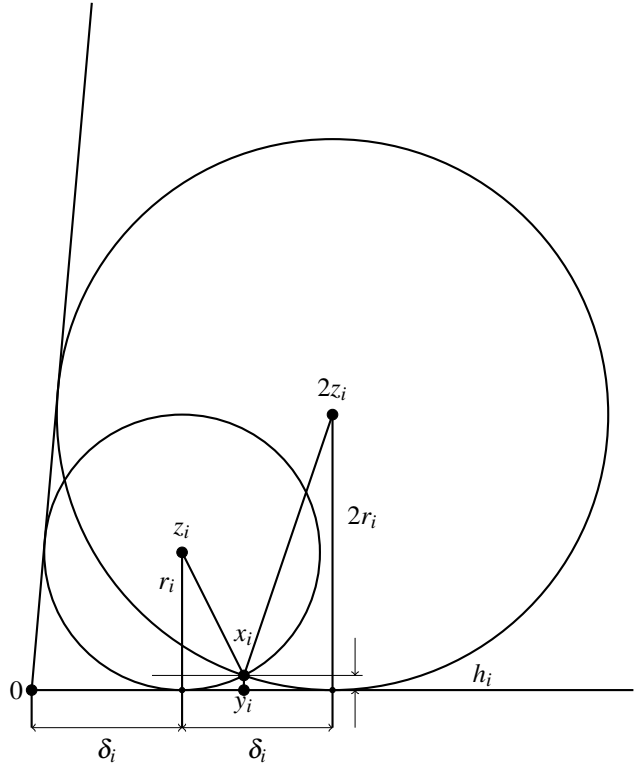

For every index $i$, pick a point $x_i\in S_i$ and let $[x_i,y_i]$ be the segment inside $\bar R_i$, perpendicular to $\partial C_i$ and with $y_i\in\partial C_i$. (In particular, $\|x_i-y_i\|=h_i$). Because $r_i\to\infty$ and (\ref{delta-zero}), the boundary $\partial\bar R_i$ close to the point $x_i$ and $y_i$ becomes increasingly close to the two $(d-1)$-dimensional affine hyperplanes through $x_i$ and $y_i$, perpendicular to $[x_i,y_i]$. In fact, $r_i\to\infty$ implies that the cones $C_i$ become increasingly obtuse, flattening $\partial\bar R_i$ close to $y_i$ as $i\to\infty$, and (\ref{delta-zero}) implies that the balls $B_i$ and $2B_i$ have increasingly large radii but they stay close to each other relative to the radii, flattening $\partial\bar R_i$ close to $x_i$ as $i\to\infty$.  Consequently, as $i\to\infty$, the tori $\bar R_i$ contain right cylinders of arbitrarily large radius around the axes $x_i+\RR(y_i-x_i)$ with heights arbitrarily close to $h_i$.  Fix a system of such cylinders $\Pi_i\subset\bar R_i$. We can assume that the heights of the $\Pi_i$ are more than $\frac12h_i$.

\MEDSCIP We have
\begin{equation}\label{new-containment}
\begin{aligned}
2Q_i\setminus(Q_i\cup 2P_i)\supset\conv&\big(\B\big(2z_i,2r_i-2\epsilon\big)\cup\{0\}\big)\setminus\\
&\big(\conv\big(\B_i(z_i,r_i+\epsilon)\cup\{0\}\big)\cup2B_i(2z_i,2z_i+2\epsilon)\big).
\end{aligned}
\end{equation}

Using the same flatness of $\partial\bar R_i$ close to $x_i$ and $y_i$, one concludes that the intersection of $\Pi_i$ with the right side of (\ref{new-containment}) contains a coaxial right sub-cylinder of height
$>(\text{the height of}\ \Pi_i-4\epsilon)>\frac12h_i-4\epsilon$ and the same radius as $\Pi_i$, provided $i\gg0$.

If $\frac12h_i-4\epsilon>1$ for infinitely many indices $i$ then the mentioned cylinders contain lattice points for $i\gg0$, contradicting (\ref{one-two-multiples}) in view of the containments (\ref{new-containment}).

Since the functions $f_i(x)=4r^2_i-(2r_i-x)^2$ and $g_i(x)=r_i^^2-(r_i-x)^2$ are increasing over the segment $[0,r_i]$, $\lim_{i\to\infty}r_i=\infty$, and $\frac12h_i-4\epsilon\le1$ for $i\gg0$, the equalities (\ref{length-estimate}) imply
\begin{align*}
\delta_i\le\sqrt{4r_i^2-(2r_i-2-8\epsilon)^2}+\sqrt{r_i^2-(r_i-2-8\epsilon)^2}<\sqrt{r_i}(2+\sqrt2)\sqrt{2+8\epsilon},
\end{align*}
provided $i\gg0$.

Finally, for all $i\gg0$ we have
\begin{align*}
\min\big(\|x\|\ &:\ x\in P_i\big)\le\|z_i\|-r_i+\epsilon=\sqrt{r_i^2+\delta_i^2}-r_i+\epsilon=\\
&\frac{\delta_i^2}{\sqrt{r_i^2+\delta_i^2}+r_i}+\epsilon<\frac{r_i(2+\sqrt2)^2(2+8\epsilon)}{2r_i}+\epsilon<47\epsilon+12.\qedhere
\end{align*}
\end{proof}

A more careful choice in the cylinders inside $\bar R_i$ in the argument above leads to a better estimate in Theorem \ref{spherical}(b), but in view of Remark \ref{obtuse} such an improvement is not worth pursuing.

\subsection{Convex hulls of all lattice points in spheres}\label{Number-theory}
There is a ubiquity of sequences $\{P_i\}_{i\in\NN}$, satisfying the stronger condition in Theorem \ref{spherical}(b), which one could call \emph{rapidly spherical families}. Here is one recipe for deriving such a sequence. Choose any divergent series of real numbers $0<r'_1<r'_2<\ldots$ and put $P_i'=\conv(\Lat(\B(r'_i,0))$. Because every unit $d$-cube in $\RR^d$ contains a lattice point and every $d$-ball $\B\subset\RR^d$ of radius $\frac{\sqrt d}2$ contains a unit cube, we have
$$
\vertex(P'_i)\subset\B(0,r'_i)\setminus\B(0,r'_i-\sqrt d/2),\quad i\in\NN.
$$
Fix an arbitrary real number $\theta>0$. The inclusions above imply
$$
\B(0,r'_i-(1+\theta)\sqrt d/2)\subset P'_i\subset\B(0,r'_i),\quad i\gg0.
$$
By Theorem \ref{Encapsulates}(f), the polytopes $P_i=(d-1)P_i'$ are normal for all $i$ and we also have
$$
\B(0,r_i-\epsilon)\subset P_i\subset\B(0,r_i+\epsilon),\quad i\gg0,
$$
where
$$
r_i=(d-1)\big(r'_i-(1+\theta)\sqrt d/4\big)\quad\text{and}\quad\epsilon=(d-1)(1+\theta)\sqrt d/4.
$$

\smallskip Similar examples can be derived when instead of balls one uses ellipsoids of fixed eccentricities per a family, not necessarily centered at $0$.

Dilated lattice polytopes usually have non-empty first lattice strata around them. In particular the proposed recipe for deriving  rapidly ellipsoidal families are unlikely to represent maximal elements in $\np(d)$. This observation motivates the interest in studying the normality of the convex hulls of \emph{all} lattice points in spheres or, more generally, ellipsoids. We have the following partial result.

\begin{theorem}\label{Ball-normal}
Let $l_1,\ldots,l_d$ be linearly independent real linear $d$-forms and $(z_1,\dots,z_d)\in \RR^d$. Consider the ellipsoid
$$
E=\left\{\xi=(\xi_1,\ldots,\xi_d)\ :\ (l_1(\xi)-z_1)^2+\cdots+(l_d(\xi)-z_d)^2\le 1\right\}\subset\RR^d,
$$
and the polytope $P=\conv(\Lat(E))$.
\begin{enumerate}[{\rm(a)}]
\item For any integer $k\geq 2$ and any point $y\in kP$ there exists a point $w\in \Lat(P)$ such that $y-w\in (k-1)E$.
\item For any $y\in \Lat(2P)$ there exist $w_1,w_2\in \Lat(P)$ such that $y=w_1+w_2$.
\item If $d=3$ then $P$ is a normal polytope.
\end{enumerate}
\end{theorem}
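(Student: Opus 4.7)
The plan is to handle (a) by a linear change of coordinates that turns the ellipsoid $E$ into a Euclidean ball, and then to make a single Cauchy--Schwarz--style estimate; parts (b) and (c) will reduce to (a) with very little extra work. Let $L\colon\RR^d\to\RR^d$ be the linear map with rows $l_1,\dots,l_d$, so that $L(E)=\B(z,1)$, the unit Euclidean ball centered at $z$. Writing $\tilde y=L(y)$, $\tilde w=L(w)$ and $\Gamma=L(\ZZ^d)$, the inclusion $(y-w)/(k-1)\in E$ translates into $\|\tilde y-\tilde w-(k-1)z\|\le k-1$, and $\Lat(E)$ is in bijection with $\Gamma\cap\B(z,1)$.

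Set $\eta=\tilde y-kz$; since $y\in kP\subset kE$, one has $\|\eta\|\le k$. Using $y/k\in P=\conv(\Lat(E))$, I would expand $L(y)/k=\sum_i t_i\tilde v_i$ as a convex combination with $\tilde v_i\in\Gamma\cap\B(z,1)$, and put $\omega_i=\tilde v_i-z$, so $\|\omega_i\|\le 1$ and $\eta=k\sum_i t_i\omega_i$. Dotting the latter identity with $\eta$ gives $\|\eta\|^2/k=\sum_i t_i(\eta\cdot\omega_i)$, forcing some index $j$ to satisfy $\eta\cdot\omega_j\ge\|\eta\|^2/k$. The key computation is then
\[
\|\eta-\omega_j\|^2=\|\eta\|^2-2(\eta\cdot\omega_j)+\|\omega_j\|^2\le\|\eta\|^2\Bigl(1-\tfrac{2}{k}\Bigr)+1\le k^2\Bigl(1-\tfrac{2}{k}\Bigr)+1=(k-1)^2,
\]
where $k\ge 2$ is essential to keep $1-2/k\ge 0$ and $\|\eta\|\le k$ enters in the last inequality. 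Pulling $\tilde v_j$ back through $L$ yields the required $w\in\Lat(P)$.

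Parts (b) and (c) should then be short. For (b), apply (a) at $k=2$: the resulting $w_1\in\Lat(P)$ satisfies $y-w_1\in E$, and since $y,w_1\in\ZZ^d$ we obtain $y-w_1\in\ZZ^d\cap E=\Lat(E)=\Lat(P)$, so $w_2:=y-w_1$ does the job. For (c), Lemma~\ref{HilbDeg} confines $\Hilb(C(P))$ to degrees $\le d-1=2$, and the degree-$1$ Hilbert basis elements are necessarily $\Lat(P)'$; thus $P$ is normal exactly when every degree-$2$ element of $\Lat(C(P))$ -- equivalently every $y\in\Lat(2P)$ -- decomposes as a sum of two elements of $\Lat(P)$, which is precisely the statement of (b). The genuine obstacle is the estimate in (a): the non-obvious move is to select the summand $\tilde v_j$ that maximizes $\eta\cdot\omega_j$, since this lets one exploit $\|\eta\|\le k$ and $\|\omega_j\|\le 1$ simultaneously and bypasses any Carath\'eodory-dimension or covering-radius input that would otherwise restrict the argument to $k\ge d+1$.
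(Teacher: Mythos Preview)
Your proof is correct and essentially identical to the paper's: after your coordinate change $L$, the paper's linear form $l(\xi)=\sum l_i(\xi)(l_i(y)-kz_i)$ is precisely $\eta\cdot L(\xi)$, the paper's choice of a vertex $w$ maximizing $l$ is your choice of $\tilde v_j$ maximizing $\eta\cdot\omega_j$, and the key inequality $\sum 2(l_i(y)-kz_i)(l_i(w)-z_i)\ge\frac{2}{k}\sum(l_i(y)-kz_i)^2$ is your $\eta\cdot\omega_j\ge\|\eta\|^2/k$, with the same expansion of $\|\eta-\omega_j\|^2$ thereafter. Parts (b) and (c) are likewise handled exactly as in the paper, via the identity $\Lat(E)=\Lat(P)$ and Lemma~\ref{HilbDeg}.
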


\begin{proof} (a) For simplicity of notation, put $\sum=\sum_{i=1}^d$. Consider the (potentially $0$) linear form
$$
l(\xi)=\sum l_i(\xi)(l_i(y)-kz_i).
$$
As $y/k\in P$ there must exist a vertex $w\in P$ such that $l(w)\geq l(y/k)$, i.e.,
$$
\sum l_i(w)(l_i(y)-kz_i)\geq \sum \frac{l_i(y)}{k}(l_i(y)-kz_i).
$$
This is equivalent to
$$
\sum (l_i(w)-z_i)(l_i(y)-kz_i)\geq \sum\left(\frac{l_i(y)}{k}-z_i\right)(l_i(y)-kz_i)
$$
and, therefore, to
\begin{equation}\label{eq:nier}
\sum 2(l_i(y)-kz_i)(l_i(w))-z_i)\geq \frac{2}{k} \sum\big(l_i(y)-kz_i\big)^2.
\end{equation}

We have
\begin{align*}
\sum&\big(l_i(y-w)-(k-1)z_i\big)^2=\sum\big((l_i(y)-kz_i)-(l_i(w)-z_i)\big)^2=\\
&\sum\big(\big(l_i(y)-kz_i\big)^2-2\big(l_i(y)-kz_i\big)\big(l_i(w)-z_i\big)+\big(l_i(w)-z_i\big)^2\big),
\end{align*}
which, in view of (\ref{eq:nier}), implies
$$
\sum\big(l_i(y-w)-(k-1)z_i\big)^2\leq \frac{k-2}{k}\sum\big(l_i(y)-kz_i\big)^2+\sum\big(l_i(w)-z_i\big)^2.
$$
As $y\in kE$ and $w\in E$ we obtain:
$$
\sum\big(l_i(y-w)-(k-1)z_i\big)^2\leq \frac{k-2}{k}\cdot k^2+1=(k-1)^2,
$$
i.e., $y-w\in (k-1)E$.

\MEDSCIP\NOINDENT(b) We choose $w$ as in (a). Then $y-w\in\Lat(E)=\Lat(P)$.

\MEDSCIP\NOINDENT(c) follows from (b) because of Lemma \ref{HilbDeg}.
\end{proof}


\begin{remark}
(a) We have tested several dozens of polytopes defined by ellipsoids with axes parallel to the coordinate axes in dimensions four and five, all of which turned out to be normal.

(b) The standard three dimensional balls $\B(0,r)$, $r=1,2,\dots,21$, define nonmaximal polytopes: all of them have height $1$ jumps. The maximal height of jumps over them varies in an irregular manner: the smallest is $2$ for $r=2$, the largest is $11$ for $r=13$, and for $r=21$ it is $9$. Despite of its irregular behavior, the maximal height of jumps seems to grow slowly with $r$.

\end{remark}

\section{Explicit maximal polytopes}\label{Maximal states}
We have found maximal polytopes of dimension $4$ and $5$. This leaves little doubt that there exist maximal polytopes of any dimension $\ge 4$, but dimension $3$ remains open. The experiments described in this section are based on a computer program written in C++ that makes heavy use of the library interface of Normaliz \cite{Nmz}.

We want to emphasize that the experiments described below have not only produced maximal polytopes, but have also motivated several central results of the preceding sections.

\subsection{The extension approach}
The basic search strategy for finding maximal elements by successive extension is very simple:
\begin{enumerate}
\item Choose a normal start polytope $P$.
\item If $\#\Lat(P)$ exceeds a preset bound, go to (1).
\item Find a jump $Q$ over $P$.
\item If none exists, stop and save the maximal polytope $P$.
\item Replace $P$ by $Q$ and go to (2).
\end{enumerate}

In addition to special constructions, like the cross-polytopes, we have implemented two methods for finding a start polytope:

\begin{enumerate}
\item[(U)] Take the unimodular $d$-simplex and extend it by a random number of random height $1$ jumps. The polytope thus reached is considered the start polytope.

\item[(S)] We start from a lattice parallelotope and `shrink' it successively by removing a vertex and taking the convex hull of the remaining vertices until no vertex can be removed without losing normality or the full dimension. The reached polytope serves as the starting point for subsequent extensions.
\end{enumerate}

When we say `random', we mean the choice of a random integer or vector within a certain range that can be modified via parameters of the search program.

At first glance, the shrinking technique (S) which was seems paradoxical: we shrink a parallelotope and then extend the shrunk polytope in order to reach a maximal one. However, (S) has proved very successful. Also (U) has led to maximal polytopes.

We can apply various strategies for finding quantum jumps over $P$. There are two major variants:
\begin{enumerate}
\item[(\textbf1)]Choose a height $1$ jump at random, provided such exists.
\item[(P)] Choose a jump which maximizes a certain parameter, meant to lead to some sort of irregular normal polytopes.
\end{enumerate}

If (\textbf1) is applied, one needs to compute only the points in the first stratum around the given polytope, and this is usually quite fast. Moreover, there is no need to test if the candidates are really jumps. For (P) we compute all candidate points according to Theorem \ref{finitestrata} in dimension $3$, but use a lower bound in dimensions $\ge 4$ for the search phase, applying the full bound in the verification phase only.

The polytopes containing the candidates are highly rational. Nevertheless their lattice points can be computed very fast via the approximation algorithm of Normaliz.

It might seem most promising to always apply strategy (P), for example with the volume of the jump. But in pure form it has two drawbacks: (i) it tends to create successive jumps along straight lines that are not limited, and (ii) it is rather time consuming to test all candidate points in decreasing order of volume.

The following mixed strategy for step (3) of the basic algorithm has led us to the maximal polytopes $P_4$ and $P_5$ described below (and many others):
\begin{enumerate}
\item[(3a)] Extend $P$ according to (\textbf1) if a height $1$ jump exists.
\item[(3b)] Otherwise apply (P).
\end{enumerate}
The two parameters for (P) that have proved successful are
\begin{enumerate}
\item[(V)] the volume of the jump, see Remark \ref{measures};
\item[(A)] the average multiplicity (or normalized $(d-1)$-volume) of the facets of $Q$.
\end{enumerate}
In fact, the larger the multiplicity of a facet $F$, the more lattice points of low height over $F$ in $P,\dots,(d-2)P$ are necessary to guarantee normality of the extension; see Theorem \ref{ParaCrit}. It is not surprising that the facet multiplicities of the maximal polytopes are quite large; see Table \ref{Widths}.

\subsection{The random generation approach} In this approach we
\begin{enumerate}
\item[(1)] Choose a normal polytope at random and
\item[(2)] check it for maximality.
\end{enumerate}
Creating a normal polytope by randomly choosing vertices becomes more and more difficult with growing dimension and number of vertices. According to our experience it works very well in dimension $4$ if we limit ourselves to simplices.

The main advantage of this brute force approach is the enormous number of candidate polytopes that can be scanned if one gives up the idea of successive extension, and one can say that even in mathematics mass production may beat sophistication.

The random generation approach has produced the simplex $P_4'$ below, many others in $\np(4)$ and two in $\np(5)$, of which one has only $21$ lattice points.

The frequency of hitting maximal elements of $\np(d)$ in our computations so far has been more or less the same for the two methods.

\subsection{Some maximal polytopes}

Table \ref{Vertices} contains the vertices of some maximal polytopes.
\begin{table}[hbt]
\begin{align*}
P_4:\ &(0,0,0,0)\quad&     P_5:\ &( 4 -13,-2,-1,1)\quad& P_4':\ &  (0,3,2,0)\\
&(3,0,2,0)\quad &    &( 4,12,13,4,-2)\quad &                    &  (1,1,3,2)\\
&(-2,-3,3,-1)\quad&  &(-2,0,-8,-2,1)\quad &                     &  (2,3,0,4)\\
&(10,3,-3,-1)\quad&  &( 0,-2,0,0,0)\quad &                      &  (4,0,0,2)\\
&(0,-3,1,-2)\quad&   &(27 -26 -15,-6,3)\quad &                  &  (4,4,4,2)\\
&(2,-2,0,-2)\quad&   &(10,-1 -11,-4,1)\quad &    &\\
&(-9,4,10,4)\quad&   &(10 -13,-2,-1,1)
\end{align*}
\caption{Vertices of maximal polytopes in dimensions $4$ and $5$}
\label{Vertices}
\end{table}
The numbers of lattice points are $41$ in $P_4$, $42$ in $P_5$ and only $22$ in $P_4'$. Note that $P_4'$ is a simplex with $22$ lattice points. These numbers are small in relation to the widths of the polytopes over their facets that we have listed in Table \ref{Widths} together with the multiplicities of the facets. Although we have no analogue of Theorem \ref{Dim3} in higher dimensions, one can expect that a maximal polytope has few lattice points relative to its facet widths and multiplicities.

By now, more than $40$ maximal polytopes have emerged in dimension $4$ and $6$ in dimension $5$. Despite of millions  of attempts with varying strategies, our search has been futile in dimension $3$.

For the three maximal polytopes the second lattice stratum is nonempty. In other words there exist height $2$ points over $P_4$, $P_5$ and $P_5'$.  There also exist maximal polytopes whose first two strata are empty.

\begin{table}[hbt]
\begin{tabular}{r|r|r|r|r|r|r|r|r|r|r|r|r}
$P_4$ &width& 29&180&66&8&20&116&40&91&32&80&160\\\hline
& mult&4&1&4&10&4&1&2&4&10&4&2
\end{tabular}
\vspace*{14pt}

\begin{tabular}{r|r|r|r|r|r|r|r|r|r|r|r}
$P_5$&width&27&105&24&24&105&105&48&105&27&105\\\hline
&mult&18&9&18&18&9&9&9&9&18&9
\end{tabular}
\vspace*{14pt}

\begin{tabular}{r|r|r|r|r|r|r}
$P_4'$&width&24&48&48&48&48\\\hline
&mult&8&4&4&4&4
\end{tabular}
\vspace*{14pt}

\caption{Widths and facet multiplicities of maximal polytopes}
\label{Widths}
\end{table}

We add a few data of the computations for $P_4$ and $P_5$. The number of lattice points satisfying  the height bound of Theorem \ref{finitestrata} are $196,697$ for $P_4$ and $13,525,003$ for $P_5$. The computation of these candidate points takes $<2$ sec for $P_4$ and $<7$ min for $P_5$.

In order to verify that a candidate point is not a quantum jump, we first check whether $\#\Lat(\conv(P,z))=\#\Lat(P)+1$. Only few candidates survive, namely $84$ for $P_4$ and $980$ for $P_5$. For these we compute the Hilbert bases of the extended polytope and look for Hilbert basis elements of degree $>1$. The computation times for the verifications are $<2$ min for $P_4$ and $<2.5$ h for $P_5$. The verifications are documented in log files that list every candidate together with a `witness', namely an extra element of $\Hilb(\conv(P,z))$. (The computations were done on a system with an Intel Xeon CPU E5-2660 0 at 2.20 GHz in strictly serial mode, the data available on request from the authors.)

\begin{remark}
(a) We have checked that the $4$-dimensional maximal polytopes $P_4$ and $P_4'$ remain maximal if we consider very ample polytopes instead of normal ones.

(b) It is obvious that our findings rely crucially on the correctness of Normaliz. In order to enhance our confidence we have verified the maximality of $P_4$ with the dual algorithm of Normaliz. It takes considerable more time than the primal algorithm.

(c) $P_5$ and $P_4'$ have nontrivial symmetries: their automorphism groups are isomorphic to $\ZZ_2\times\ZZ_2$ and $\ZZ_2$, respectively.
\end{remark}

The techniques employed in our experiments, apart of random generation, follow descending and ascending chains in $\np(d)$. They can hardly find polytopes that are simultaneously minimal and maximal.

We end with the following question to which we were naturally led in this paper and which seems very difficult at present.

\begin{question}\label{Question}
\begin{enumerate}[{\rm(a)}]
\item
Does $\np(3)$ have maximal elements?  Does it have nontrivial minimal elements?
\item
Is the convex hull of all lattice points in every ellipsoid normal?
\item
Does there exist a normal polytope that is both a minimal and maximal element of the poset $\np(d)$?
\end{enumerate}
\end{question}

\bibliography{references_polytopes}
\bibliographystyle{plain}

\end{document}